\author[Ilya Kapovich]{Ilya Kapovich}
\address{\tt Department of Mathematics, University of Illinois at
 Urbana-Champaign, 1409 West Green Street, Urbana, IL 61801, USA}
 \email{\tt kapovich@math.uiuc.edu}
\title[Integral weight realization theorem]{An integral weight realization theorem for subset currents on free groups}
\newtheorem{theor}{Theorem}
\newtheorem{thm}{Theorem}[section] \newtheorem{lem}[thm]{Lemma}
\newtheorem{cor}[thm]{Corollary} 
\newtheorem{prop}[thm]{Proposition} \theoremstyle{definition}
\newtheorem{defn}[thm]{Definition}
\newtheorem{conv}[thm]{Convention} \newtheorem{rem}[thm]{Remark}
\newtheorem{propdfn}[thm]{Proposition-Definition}
\def\strutdepth{\dp\strutbox}
\def \ss{\strut\vadjust{\kern-\strutdepth \sss}}
\def \sss{\vtop to \strutdepth{
\baselineskip\strutdepth\vss\llap{$\diamondsuit\;\;$}\null}}
\def\strutdepth{\dp\strutbox}
\def \sst{\strut\vadjust{\kern-\strutdepth \ssss}}
\def \ssss{\vtop to \strutdepth{
\baselineskip\strutdepth\vss\llap{$\spadesuit\;\;$}\null}}
\def\strutdepth{\dp\strutbox}
\def \ssh{\strut\vadjust{\kern-\strutdepth \sssh}}
\def \sssh{\vtop to \strutdepth{
\baselineskip\strutdepth\vss\llap{$\heartsuit\;\;$}\null}}
\newcommand{\Z}{\mathbb Z}
\newcommand{\A}{{\cal A}}
\newcommand{\cal}{\mathcal}
\def\epsilon{\varepsilon}
\def\phi{\varphi}
\newcommand{\Curr}{\mbox{Curr}}
\newcommand{\Out}{\mbox{Out}}
\newcommand{\vol}{\mbox{vol}}
\newcommand{\FN}{F_N}   % F ou F_n ou F_N ?
\newcommand{\cvn}{\mbox{cv}_N}
\newcommand{\gcn}{\mathcal S\Curr(\FN)}
\newcommand{\gcnr}{\mathcal S\Curr_r(\FN)}
\newcommand{\pgcn}{\mathbb P\gcn}
\newcommand\isom{\mathrel{\text{%
   \setbox0\hbox{$\rightarrow$}%
   \rlap{\hbox to \wd0{\hss\raisebox{0.9\height}{$\sim$}\hss}}\box0
}}}
\begin{document}

\begin{abstract}

We prove that if $N\ge 2$ and $\alpha: F_N\to \pi_1(\Gamma)$ is a
marking on $F_N$, then for any integer $r\ge 2$ and any
$F_N$-invariant collection of non-negative integral  ``weights''
associated to all subtrees $K$ of $\widetilde \Gamma$ of radius $\le
r$ satisfying some natural ``switch'' conditions, there exists a
finite cyclically reduced folded $\Gamma$-graph $\Delta$ realizing
these weights as numbers of ``occurrences'' of $K$ in $\Delta$.  As an
application, we give a new, direct and explicit, proof of one of the
main results of our  paper with Nagnibeda~\cite{KN3} stating that for
any $N\ge 2$ the set $\gcnr$ of all rational subset currents is dense
in the space $\gcn$ of subset currents on $F_N$. (The proof given
in~\cite{KN3} was indirect and omitted significant details. The proof
given here is complete and, we hope, more accessible to the $Out(F_N)$
community).

We also answer one of the questions (Problem 10.11) posed in \cite{KN3}. Thus we  prove that if a nonzero $\mu\in \gcn$  has all weights with respect to some marking being integers, then $\mu$ is the sum of finitely many ``counting'' currents corresponding to nontrivial finitely generated subgroups of $F_N$.
\end{abstract}

\thanks{The author was supported by the NSF
  grant DMS-1405146}

\subjclass[2000]{Primary 20F, Secondary 57M, 37B, 37D}

\maketitle

%\tableofcontents

\section{Introduction}

The main purpose of this paper is to give a proof of Theorem~\ref{thm:main} below (originally established in \cite{KN3} via an indirect argument) which is self-contained, direct, explicit, and can be relatively easily understood by the $Out(F_N)$ community.  We start with some history and motivation for the topic of subset currents on $F_N$.

\subsection{Shift-invariant measures and measures supported on periodic orbits}
Let $A$ be a finite alphabet consisting of $\ge 2$ letters.
One of the main objects studied in symbolic dynamics is the space $\mathcal M_A$ of all finite shift-invariant positive Borel measures on the two-sided shift space $A^\mathbb Z$. Here $A^\mathbb Z$ is the space of all bi-infinite word $\xi=\dots x_{-2} x_{-1} x_0 x_1x_2\dots x_n \dots$, where $x_i\in A$.   We also think of elements of $A^\mathbb Z$ as functions $\xi:\mathbb Z\to A$ where $\xi(i)$ is the $i$-th letter of $\xi$.

The space $A^\Z$ is endowed with the standard topology 
where two sequences $\xi_1,\xi_2\in A^\omega$ are ``close" if $\xi_1|_{[-n,\dots,n]}=\xi_2|_{[-n,\dots,n]}$ for a large $n\ge 1$.  With this topology $A^\Z$ is homeomorphic to the Cantor set.  The \emph{shift map} $T:\A^\Z\to A^\Z$ consists in shifting every bi-infinite word one letter to the left. Thus for an element $\xi:\Z\to A$ of $A^\Z$ we have $(T\xi)(i)=\xi(i+1)$, where $i\in \Z$. This map $T$ is easily seen to be a homeomorphism of $A^\Z$.  Now $\mathcal M_A$ consists of all $T$-invariant positive Borel measures on $A^\Z$ with $0\le \mu(A^\Z)<\infty$, that is, of all finite positive Borel measures $\mu$ on $A^\Z$ such that for every Borel subset $S\subseteq A^\Z$ we have $\mu(S)=\mu(T^{-1}S)$. The space $\mathcal M_A$, endowed with the weak-* topology, is a locally compact infinite-dimensional Hausdorff space.  For $\xi\in A^\Z$ the orbit $\mathcal O_T(\xi)=\{T^i\xi| i\in \mathbb Z, i\ge 0\}$  is finite if and only if the word $\xi$ is periodic, that is, has the form $\xi=\overset{\infty}{w}=\ldots wwww\dots $ for some nontrivial word $w$ over $A$ which is not a proper power in the set $A^\ast$ of all finite words over $A$. More precisely, if $w\in A^\ast$ is as above and $m=|w|\le 1$ is the length of $w$, the word $\xi=\overset{\infty}{w}$ is defined so that for any $n\in\Z$ and $j\in \{1,\dots, m\}$ with $n\equiv j \mod m$, the element $\xi(n)\in A$ is the $j$-th letter of $w$.  Then the orbit $\mathcal O_T(\xi)$ has cardinality $m$ and $\mathcal O_T(\xi)=\{ \overset{\infty}{z} \mid z \text{ is a cyclic permutation of } w\}$.  Such finite $T$-orbits are also called \emph{periodic} orbits for $T$. 
Denote by $Z(A)$ the set of all nontrivial words $w\in A^\ast$ which are not proper powers in $A^\ast$.
For every $w\in Z(A)$ there is an associated $T$-invariant measure $\mu_w\in \mathcal M_A$ supported on $\mathcal O_T(\overset{\infty}{w})$ and  defined as $\mu_w=\sum_{z} \delta_{\overset{\infty}{z}}$, where the summation is taken over all cyclic permutations $z$ of $w$.  Now if $w\in A^\ast$ is an arbitrary nontrivial word, there exist unique $k\ge 1$ and $w_1\in Z(A)$ such that $w=w_1^k$. We then define $\mu_w:=k \mu_{w_1}$. For convenience for the empty word $\epsilon\in A^\ast$ we put $\mu_\epsilon=0\in \mathcal M_A$. Then for every $w\in A^\ast$ and every $k\ge 1$ we have $\mu_{w^k}=k\mu_w$.

A key basic result of symbolic dynamics says that the set 
\[
\mathcal R_A:=\{c\mu_w| w\in Z(A), c\ge 0\}=\{c\mu_w| w\in A^\ast, c\ge 0\}
\]
 is a dense subset of $\mathcal M_A$, see \cite{Pa61,Ox63,KH}. A similar fact also holds for irreducible subshifts of finite type in $A^\Z$.  There are many different proofs of the fact that $\mathcal R_A$ is dense in $\mathcal M_A$ but, in combinatorial terms, most of these proofs rely (directly or indirectly) on a certain ``weight realization" theorem.
Since a generalization of this theorem for the case of subset currents on a free group is the main result of the present paper, we need to explain the classic case of the theorem here in more detail.
To every nonempty finite word $v\in A^\ast$ with $|w|=m\ge 1$ and an integer $n\in \Z$ we associate a \emph{cylinder} set $C_{v,n}\subseteq A^\Z$ consisting of all semi-infinite words $\xi\in A^\Z$ such that $\xi|_{[n,\dots, n+m-1]}=v$. The sets $C_{v,n}$ are compact and open in $A^\Z$ and the collection of all such cylinder sets is a basis for the standard topology on $A^\Z$ mentioned above.  If $\mu\in \mathcal A$ then, by shift-invariance of $\mu$,  for any $n\in \Z$ and any nontrivial $v\in A^\ast$ we have $\mu(C_{v,n})=\mu(C_{v,0})$.  Thus for $\mu\in \mathcal M_A$ and a nonempty word $v\in A^\ast$ we define the \emph{weight} $\langle v,\mu\rangle:=\mu(C_{v,0})$. For the empty word $\epsilon\in A^\ast$ we also put $\langle \epsilon,\mu\rangle:=\mu(A^\Z)$. 

Any measure $\mu\in \mathcal M_A$ is then uniquely determined by its collection of weights $(\langle v,\mu\rangle)_{\mu\in A^\ast}$. The fact that $\mu$ is finitely-additive  translates into the requirement that the weights satisfy the following ``switch" conditions: for every $v\in A^\ast$ we have
\[
\langle v,\mu\rangle=\sum_{a\in A} \langle va,\mu\rangle = \sum_{b\in A} \langle bv,\mu\rangle \tag{$\dag$}
\]
and hence for every $v\in A^\ast$ we have $\displaystyle\sum_{a\in A} \langle va,\mu\rangle = \sum_{b\in A} \langle bv,\mu\rangle$. Note that for every $\mu\in \mathcal M_A$ and every integer $k\ge 1$ we have $\mu(A^\Z)=\sum_{v\in A^k} \langle v,\mu\rangle$.  Kolmogorov's measure extension theorem implies that for any collection of nonnegative ``weights", indexed by elements of $A^\ast$, there exists a unique measure $\mu\in \mathcal M_A$ realizing these weights. That is, $\mathcal M_A$ has a bijective correspondence with the set of all families $\mathfrak t=(t_v)_{v\in A^\ast}$ of nonnegative real numbers such that for every $v\in A^\ast$ we have
\[
\sum_{a\in A} t_{va}=\sum_{b\in A} t_{bu}  \tag{$\spadesuit$}.
\]

For $w,v\in A^\ast$ the weight $\langle v,\mu_w\rangle=\mu_w(C_{v,0})$ has a useful combinatorial interpretation.  Namely, for a nontrivial $w\in A^\ast$ let $\overline w$ be the associated \emph{cyclic word}, that is, a directed labelled graph obtained by subdividing a circle into $m=|w|$ edges, with directed edges labelled by elements of $A$, so that going around this circle counter-clockwise once from some vertex on this graph results in reading precisely the word $w$. The graph $\underline w$ does not have a distinguished base-vertex, so that for any cyclic permutation $z$ of $w$ the graphs $\overline w$ and $\overline{z}$ are isomorphic as directed labelled graphs. For every nontrivial word $v\in A^\ast$ let $\langle v,\overline{w}\rangle$ be the number of \emph{occurrences} of $v$ in $\overline w$, that is, the number of vertices in $\overline w$ from which it is possible to ``read" the word $v$ in $\overline w$ by going counter-clockwise and while never leaving the circle $\overline w$ (we allow the path corresponding to reading $v$ in $\overline w$ to possibly begin and end at different vertices and also to possibly overlap itself). For example, if $A=\{a,b\}$ and $w=a^2$ then for every $k\ge 1$ we have $\langle a^k, \overline w\rangle=2$.  A key basic observation shows that for any nontrivial words $v,w\in A^\ast$ we have $\langle v,\overline{w}\rangle=\mu_w(C_v)=\langle v,\mu_w\rangle$.
While there are many ways to prove that the set $\mathcal R_A\subseteq \mathcal M_A$ is dense in $\mathcal M_A$, the most explicit proofs of this fact rely on the following ``integral weight realization theorem":

\begin{prop}\label{prop:iw}
Let $A$ be a finite alphabet consisting of at least two letters and let $m\ge 2$ be an arbitrary integer.
Let $\tau=(t_v)_{v\in A^m}$ be a family of non-negative integers $t_v\in \mathbb Z, t_v\ge 0$ such that for some $v\in A^m$ $t_v\ne 0$ and such that for every $u\in A^{m-1}$ we have
\[
\sum_{a\in A} t_{ua}= \sum_{b\in A} t_{bv}.\tag{$\diamondsuit$}
\]
Then there exists a finite collection of nontrivial words $w_1,\dots, w_p\in A^\ast$ such that for every $v\in A^m$ we have $\sum_{i=1}^p \langle v, \overline{w_i}\rangle=t_v$. 
\end{prop}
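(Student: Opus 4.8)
The plan is to recast the combinatorial data $\tau=(t_v)_{v\in A^m}$ as a finite directed multigraph of de Bruijn type, and then read off the desired words by traversing Eulerian circuits (or, equivalently, a cycle decomposition) of that graph. Concretely, first I would build a graph $G$ with vertex set $A^{m-1}$: for each $v=v_1\cdots v_m\in A^m$ introduce $t_v$ parallel directed edges, each labelled $v$, from the length-$(m-1)$ prefix $v_1\cdots v_{m-1}$ to the length-$(m-1)$ suffix $v_2\cdots v_m$. Then $G$ has $\sum_{v\in A^m} t_v$ edges counted with multiplicity, and all edges carrying a given label $v$ join the same ordered pair of vertices. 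The key observation is that the out-degree of a vertex $u\in A^{m-1}$ equals $\sum_{a\in A} t_{ua}$ and its in-degree equals $\sum_{b\in A} t_{bu}$, so the switch condition $(\diamondsuit)$ says precisely that $G$ is \emph{balanced}: in-degree equals out-degree at every vertex. Since $\tau\neq 0$, $G$ has at least one edge.

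Next, I would invoke the classical fact that a finite balanced directed multigraph has its edge set partitioned into closed walks: restrict to the finitely many weakly connected components that contain an edge; each is again balanced, hence carries an Eulerian circuit, and together these circuits $C_1,\dots,C_p$ use every edge of $G$ exactly once. It then remains to convert a single closed walk $C=e_1,\dots,e_\ell$ (indices mod $\ell$) into a cyclic word. Writing $v^{(j)}\in A^m$ for the label of $e_j$, consecutiveness of the walk forces the length-$(m-1)$ suffix of $v^{(j)}$ to equal the length-$(m-1)$ prefix of $v^{(j+1)}$, i.e. $v^{(j+1)}_i=v^{(j)}_{i+1}$ for $1\le i\le m-1$. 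Setting $w:=v^{(1)}_1 v^{(2)}_1\cdots v^{(\ell)}_1\in A^\ast$, a short induction using these overlap relations shows that in the cyclic word $\overline w$ of length $\ell$ the length-$m$ word read counter-clockwise starting at position $j$ is exactly $v^{(j)}$; hence $\langle v,\overline w\rangle=\#\{\,j : v^{(j)}=v\,\}$ is the number of edges of $C$ labelled $v$.

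Finally, applying this to $C_1,\dots,C_p$ yields nontrivial words $w_1,\dots,w_p\in A^\ast$ (nontrivial because each $C_i$ has at least one edge), and since the $C_i$ partition the edge set of $G$, for every $v\in A^m$ we obtain $\sum_{i=1}^p \langle v,\overline{w_i}\rangle = \#\{\text{edges of }G\text{ labelled }v\}=t_v$, which is the required conclusion.

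The one point that needs genuine care is the bookkeeping in the middle step: one must check that even when $\ell<m$ — so that reading a length-$m$ subword of $\overline w$ wraps around the circle one or more times and may overlap itself — the occurrence at position $j$ still coincides with the edge label $v^{(j)}$. This is exactly where the de Bruijn overlap condition, propagated \emph{cyclically} around the closed walk, is used, and it is consistent with the sample computation $\langle a^k,\overline{a^2}\rangle=2$ recorded above. Everything else is the standard Eulerian-circuit argument, so I expect no further obstacles.
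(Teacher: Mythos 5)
Your proposal is correct and is essentially the proof the paper has in mind: it explicitly describes the standard argument via an Euler circuit in the Rauzy--de Bruijn graph $\Gamma_\tau$ associated to $\tau$, with the balance (in-degree equals out-degree) condition supplied by $(\diamondsuit)$, exactly as you construct it. Your careful handling of the wrap-around case $\ell<m$ when reading occurrences off a short cycle is a worthwhile detail, but the overall route coincides with the paper's.
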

Proposition~\ref{prop:iw} straightforwardly implies that the set of all finite linear combinations $c_1\mu_{w_1}+\dots +c_k\mu_{w_k}$, where $k\ge 1$, $c_i\ge 0$ and $w_i\in \A^\ast$, is dense in $\mathcal M_A$. From here, with a bit of extra work, one can deduce that $\mathcal R_A=\{c\mu_w| w\in A^ast, c\ge 0\}$ is dense in $\mathcal M_A$ as well.
The proofs of Proposition~\ref{prop:iw} usually rely, in some form, on finding Euler circuits in some directed ``Rauzy  -- de Bruijn graph" $\Gamma_\tau$ (see \cite{R83,B85,FM})   associated to $\tau=(t_v)_{v\in A^m}$ as in the statement of the proposition (assuming, say, that $t_v>0$ for all $v\in A^m$). The existence of such an Euler circuit in $\Gamma_\tau$  requires checking that the in-degree of every vertex of $\Gamma_w$ is equal to the out-degree of this vertex, and this condition does hold because of equation $(\diamondsuit)$.  See, for example, \cite{Ka1,Ka2}, for the implementation of this approach in the context of (ordinary) geodesic currents on free groups.

This ``Euler circuit'' approach to proving Proposition~\ref{prop:iw}, as well as most other approaches to proving that $\mathcal R_A$ is dense in $\mathcal M_A$, significantly rely on the "commutative" or "linear" nature of the shift space $A^\Z$, that is, on the fact that every finite subword of an element of $A^\Z$ can be thought of as written on a segment of $\mathbb Z$.
A key element of the geometry of $\mathbb Z$ utilized in all of these
proofs uses the fact that every finite subsegment of $\mathbb Z$ has a
unique direction of extending it forward in $\mathbb Z$ and a unique
direction of extending it backwards in $\mathbb Z$. These approaches no longer work in the non-commutative  and highly branching context of subset currents on free groups, and we will see below that a different tool is needed to prove a version of the integral weight realization theorem there.

\subsection{Subset currents on free groups.}

In a paper with Tatiana Nagnibeda~\cite{KN3} we introduced and studied the notion of a \emph{subset current} on a free group $F_N$. This concept is motivated by that of a \emph{geodesic current}. Geodesic currents on $F_N$ are measures that generalize conjugacy classes of nontrivial elements of $F_N$. The space $Curr(F_N)$ of all geodesic currents on $F_N$ turns out to be highly useful in the study of the dynamics and geometry of $\Out(F_N)$ and of the Culler-Vogtmann Outer space, particularly via the use of the ``geometric intersection form'' constructed in~\cite{KL2}. See \cite{KN3} for an extended discussion and \cite{BF08,BR12,CK,Ha12,CP,KL2,KL3} for recent examples of  such applications.
Similarly, the notion of a subset current is a measure-theoretic analog of the conjugacy class of a nontrivial finitely generated subgroup of $F_N$.
For a free group $F_N$ let $\mathfrak C_N$ be the space of all closed subsets $S\subseteq \partial F_N$ such that $S$ consists of at least two elements. The space $\mathfrak C_N$ comes equipped with a natural topology (see Section~\ref{sec:background} below and \cite{KN3} for details) such that $\mathfrak C_N$ is a locally compact totally disconnected Hausdorff topological space. The action of $F_N$ on $\partial F_N$ by translations extends to a natural translation action of  $F_N$ on $\mathfrak C_N$ by homeomorphisms.  A \emph{subset current} on $F_N$ is a positive Borel measure $\mu$ on $\mathfrak C_N$ such that $\mu$ is finite on compact subsets and is $F_N$-invariant. The space $\gcn$ of all subset currents on $F_N$ comes equipped with a natural weak-* topology and a natural action of $\Out(F_N)$ by continuous $\mathbb R_{\ge 0}$-linear transformations.

Given a nontrivial finitely generated subgroup $H\le F_N$, there is a naturally associated \emph{counting} subset current $\eta_H\in \gcn$. The limit set $\Lambda(H)\subseteq \partial F_N$ is a closed $F_N$-invariant subset of $\partial F_N$ and, since $H\ne \{1\}$, we have $\Lambda(H)\in \mathfrak C_N$. Moreover, for any $g\in F_N$ $\Lambda(gHg^{-1})=g\Lambda(H)$.  If $H$ is equal to its commensurator $Comm_{F_N}(H)$, we define $\eta_H:=\sum_{H_1\in [H]} \delta_{\Lambda(H_1)}$, where $[H]$ is the conjugacy class of $H$ in $F_N$. For an arbitrary nontrivial finitely generated subgroup $H\le F_N$ it is known that $m:=[Comm_{F_N}(H):H]<\infty$ and that $Comm_{F_N}(H)$ is equal to its own commensurator in $F_N$. Then we define $\eta_{H}:=m\, \eta_{Comm_{F_N}(H)}$.  It is shown in \cite{KN3} that $\eta_H$ is indeed a subset current on $F_N$. A subset current $\mu\in \gcn$ is called \emph{rational} if $\mu=c\eta_H$ for some $c\ge 0$ and some nontrivial a finitely generated $H\le F_N$.
Denote by $\gcnr$ the set of all rational subset currents on $F_N$.

One can also equivalently describe 
$\eta_H$ in more combinatorial terms, using Stallings core graphs, see \cite{KN3} and Proposition-Definition~\ref{prop:wd} below. Such a combinatorial description exists for any ``marking" on $F_N$ (that is, an isomorphism $\alpha$ between $F_N$ and $\pi_1(\Gamma)$ where $\Gamma$ is a finite connected graph without any degree-1 vertices and with the first betti number equal to $N$). For the purposes of stressing the analogy with $A^\Z$ described above we will assume that $A=\{a_1,\dots, a_N\}$ is a free basis of $F_N$, that $\Gamma_A$ is a wedge of $N$ oriented loop-edges labelled by $a_1,\dots, a_N$ wedged at a vertex $p_0$ and that $F_N$ is identified with $\pi_1(\Gamma_A,p_0)$ in the natural way according to this labelling. Thus $X_A=\widetilde \Gamma_A$ is the Cayley graph of $F_N$ with respect to $A$ and $\partial F_N=\partial X_A$. The space $\mathfrak C_N$ is then canonically identified with the space $\mathfrak T_A$ of all infinite subtrees $Y$ of $X_A$ such that $Y$ has no degree-one vertices. This identification is given by sending a tree $Y\in \mathfrak T_A$ to the closed subset $\partial Y\subseteq \partial X_A=\partial F_N$, so that $\partial Y\in \mathfrak C_N$.  The inverse map is given by taking a closed subset $S$ of $\partial X_A$ consisting of at least two points and putting $T\in \mathfrak T_A$ to be the convex hull of $S$ in $X_A$. Thus elements of $\gcn$ can be thought of as locally finite $F_N$-invariant measures on $\mathfrak T_A$.  Let $\mathfrak T_{A,1}$ be the space of all $Y\in \mathfrak T_A$ such that the element $1\in F_N$ is a vertex of $Y$; we think of $1$ as a base-vertex for every tree $T\in \mathfrak T_{A,1}$.  Then  $\mathfrak T_{A,1}$ is compact subset of $\mathfrak T_A$ and one can further identify $\gcn$ with the set of all finite Borel measures on $\mathfrak T_{A,1}$ which are invariant with respect to ``root change".  This point of view connects  $\gcn$  with the study of ``invariant random subgroups" (IRS) and of ``unimodular graph messures", \cite{AGV12,Bow12,BGK,DDMN,D02,Gr11,Sa11,Vershik,Ve11}. See \cite{KN3} for a more detailed discussion regarding these connections.

The standard topology on $\mathfrak T_A$ (and thus on $\mathfrak C_N$) can be described in terms of suitable ``cylinder" sets.
For any finite non-degenerate subtree $K$ of $X_A$ the ``cylinder"
$Cyl_A(K)$ is defined to be the set of all $Y\in \mathfrak T_A$ such
that $K\subseteq Y$ and such that for every $\xi\in \partial T$ there
exists a terminal edge $e$ of $K$ (oriented ``from'' $K$) such that the geodesic ray from the initial vertex of $e$ to $\xi$ in $X_A$ starts with $e$.  We denote by $\mathcal B_A$ the set of all finite non-degenerate subtrees of $X_A$ and by $\mathbf B_A$ the set of all $F_N$-translation classes $[K]$ of trees $K\in \mathcal B_A$.  The cylinders $Cyl_A(K)$, where $K\in \mathcal B_A$, are compact and open in $\mathfrak T_A$ and they form a basis for the standard topology on $\mathfrak T_A$ (and hence, via the identification of $\mathfrak T_A$ with $\mathfrak C_N$, of the standard topology on $\mathfrak C_N$).
 If $\mu\in\gcn$, and $K\in \mathcal B_A$, then, by $F_N$-invariance of $\mu$, the value $\mu(Cyl_A(K))$ depends only on $\mu$ and the $F_N$-translation class $[K]\in \mathbf B_A$ of $K$. 
 For $K\in \mathcal B_A$ and $\mu\in\gcn$ we define the ``weights" $\langle [K],\mu\rangle_A=\langle K,\mu\rangle_A:=\mu(Cyl_A(K))$. Then any $\mu\in \gcn$ is uniquely determined by its collection of weights $(\langle K,\mu\rangle_A)_{K\in \mathcal B_A}$. There are natural disjoint union ``splitting" formulas for the cylinders which, in view of finite additivity of generalized currents, imply corresponding ``switch condition" equations that are satisfied by the weights of subset currents. These switch conditions have somewhat unexpected form, and that's why we discuss them here in more detail. Let $K\subseteq X_A$ be a finite non-degenerate tree and let $e$ be a terminal (oriented) edge of $K$. Let $q(e)$ be the set of all edges $e'$ of $X_A$ such that $ee'$ is a reduced edge-path in $X_A$. Since $X_A$ is a $(2N)$-regular tree, we have $\#(q(e))=2N-1$.  Denote by $P_+(q(e))$ the set of all nonempty subsets of $q(e)$. Then $Cyl_A(K)=\sqcup_{K'\in P_+(q(e))} Cyl_A(K\cup K')$. Therefore for every $\mu\in \gcn$, for every $K\in \mathcal B_A$ and every terminal edge $e$ of $K$ we have
 \[
 \langle K, \mu\rangle_A=\sum_{K'\in P_+(q(e))} \langle K\cup K', \mu\rangle_A.\tag{!}
 \] 
Equations (!) for elements of $\gcn$ play the role of the switch conditions $(\dag)$ for shift-invariant measures on a two-sided shift space discussed above.  As noted earlier, if $H\le F_N$ is a nontrivial finitely generated subgroup, then the counting subset current $\eta_H\in \gcn$ can be described more explicitly in combinatorial terms. Namely let $\widehat X_A$ be the cover of $\Gamma_A$ corresponding to $H$ and let $\Delta_H\subseteq \widehat X_A$ be the \emph{core} of $\widehat X_A$ that is, the smallest connected subgraph of $\widehat X_A$ whose inclusion into $\widehat X_A$ is a homotopy equivalence. Then the oriented edges of $\Delta_H$ are naturally labelled by elements of $A^{\pm 1}$ and this labelling makes $\Delta_H$ into a \emph{folded $A$-graph} in the sense of the theory of Stallings folds~\cite{KM,St}. That is, for any vertex of $\Delta_H$ and any letter $a\in A^{\pm 1}$ there is at most one edge labelled $a$ in $\Delta_H$ originating from this vertex. Moreover, $\Delta_H$ is also ``cyclically reduced", that is, every vertex of this graph has degree $\ge 2$.  There is a natural bijective correspondence between the set of conjugacy classes of finitely generated nontrivial subgroups of $F_N$ and the set of labelled isomorphism types of finite connected cyclically reduced folded $A$-graphs.

Given any folded (and possibly disconnected) cyclically reduced $A$-graph $\Delta$ and a finite tree $K\in \mathcal B_A$, an \emph{occurrence} of $K$ in $\Delta$ is a label-preserving graph map $f:K\to \Delta$ (sending vertices to vertices and edges to edges, respecting labels) which is an immersion and which is a local homeomorphism at every point of $K$ other than terminal vertices of $K$. Thus if $x$ is a non-terminal vertex of $K$ then $f$ maps a small neighborhood of $x$ in $K$ homeomorphically onto a small neighborhood of $f(x)$ in $\Delta$. In particular, it follows that the degree of $x$ in $K$ is equal to the degree of $f(x)$ in $\Delta$.  We denote by $\langle K,\Delta\rangle_A$ the number of all occurrences of $K$ in $\Delta$. Then it turns out that for every nontrivial finitely generated subgroup $H\le F_N$ and every $K\in \mathcal B_A$ we have $\langle K,\eta_H\rangle_A=\langle K,\Delta_H\rangle_A$.

In order to approximate elements of $\gcn$ by rational subset currents we need to understand how to realize finite collections of nonnegative integral weights (where instead of all trees $K\in\mathcal B_A$ we take a suitable finite subset of $\mathcal B_A$) satisfying the switch conditions (!) by finite cyclically reduced graphs $\Delta$.

Theorem~\ref{thm:A} below (c.f.  Theorem~\ref{thm:realization}) provides the requisite realizability result. 
Theorem~\ref{thm:A} is stated for an arbitrary marking on $F_N$ but the case of the marking $F_N=\pi_1(\Gamma_A)$ given by an ``$N$-rose" corresponding to a free basis $A$ of $F_N$ already conveys the essence of Theorem~\ref{thm:A}.    To set up the relevant notations for the general case, let $N\ge 2$, let $\alpha:F_N\to \Gamma$ be a marking on $F_N$ and let $X=\widetilde \Gamma$.
Let $r\ge 2$ be an arbitrary integer and let $\mathcal B_{\Gamma,r}$
be the set of all finite non-degenerate subtrees $K$ of $X$ such that
for some vertex $p$ of $K$ the distance from $p$ to every terminal
vertex of $K$ is equal to $r$. Let  $\mathcal B'_{\Gamma,r}$ be the
set of all finite non-degenerate subtrees $J$ of $X$ such that for
some edge of $J$  the distance from the midpoint $p$ of this edge to
every terminal vertex of $K$ is equal to $r-\frac{1}{2}$. Now let
$J\in \mathcal B'_{\Gamma,r}$  and let $p$ be the midpoint of an edge $e$ of $J$ such that the distance from $p$ to every terminal vertex of $J$ is equal to $r-\frac{1}{2}$. Let $J_0$ be the connected component of $J-\{p\}$ containing the origin of $e$ and let  $J_1$ be the connected component of $J-\{p\}$ containing the terminus of $e$. 
Let $e_1,\dots, e_n$ be all the terminal edges of $J$ contained in $J_0$ and let $f_1,\dots, f_m$ be all the terminal edges of $J$ contained in $J_1$. We say that $\{e_1,\dots, e_n\}\sqcup \{f_1,\dots, f_m\}$ is the \emph{geometric partition} of the set of terminal edges of $J$.

\begin{theor}[Integral Weight Realization Theorem]\label{thm:A}
Let $N\ge 2$, let $\alpha:F_N\to \Gamma$ be a marking on $F_N$, let $X=\widetilde \Gamma$, and let $r\ge 2$ be an arbitrary integer.

Let $\vartheta:\mathcal B_{\Gamma,r}\to \mathbb Z_{\ge 0}$ be a function such that:

\begin{enumerate}
\item For every $K\in \mathcal B_{\Gamma,r}$ and every $g\in F_N$ we have $\vartheta(gK)=\vartheta(K)$.
\item  For every $J\in \mathcal B'_{\Gamma,r}$ with  the geometric partition $\{e_1,\dots, e_n\}\sqcup \{f_1,\dots, f_m\}$ of the set of terminal edges of $J$ we have:
\[
\sum_{(U_1,\dots, U_n)} \vartheta(J\cup U_1\cup\dots \cup U_n) =\sum_{(V_1,\dots, V_m)}  \vartheta(J\cup V_1\cup\dots \cup V_m)
\]
Here the first sum is taken over all $(U_1,\dots, U_n)$ such that 
$U_i\in P_+(q(e_i))$, and the second sum is taken over all $(V_1,\dots, V_m)$ such that 
$V_j\in P_+(q(f_j))$.

\item There exists $K\in \mathcal B_{\Gamma,r}$ such that $\vartheta(K)>0$. 
\end{enumerate}
Then there exists a finite folded (possibly disconnected) cyclically reduced $\Gamma$-graph $\Delta$ such that for every $K\in \mathcal B_{\Gamma,r}$ we have
\[
\vartheta(K)=\langle K,\Delta\rangle_\Gamma.
\]
\end{theor}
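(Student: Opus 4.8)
The plan is to reduce the theorem to a purely combinatorial assembly of $\Delta$ out of local ``ball types'' and then to carry out that assembly by a matching argument, in which the switch conditions~(2) play exactly the role that the equality of in-degrees and out-degrees plays in the classical Euler-circuit proof of Proposition~\ref{prop:iw}. The first, and conceptually main, step is to observe that the requirement that an occurrence $f\colon K\to\Delta$ be a local homeomorphism at every non-terminal vertex of $K$ is extremely rigid: writing $p$ for the center of $K\in\mathcal B_{\Gamma,r}$, every vertex at distance $<r$ from $p$ is non-terminal and every vertex at distance exactly $r$ is terminal. Since $\Delta$ is folded, $f$ lifts to an embedding of $K$ into the universal cover $T$ of the component of $\Delta$ containing the image of $f$, with $T$ realized as a subtree of $X$. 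One then checks, by induction on the radius, that $f$ must carry $K$ isomorphically onto the entire ball of radius $r$ about $\tilde v$ in $T$, where $\tilde v$ is the lift of $v:=f(p)$. Because $\Delta$ is cyclically reduced, $T$ has no valence-one vertices, so this ball lies in $\mathcal B_{\Gamma,r}$; conversely, any vertex $v$ whose radius-$r$ ball in $T$ has type $[K]$ gives a unique occurrence of $K$. Hence $\langle K,\Delta\rangle_\Gamma$ is simply the number of vertices of $\Delta$ whose radius-$r$ ball has type $[K]$, and it suffices to build a finite folded cyclically reduced $\Gamma$-graph $\Delta$ realizing $\vartheta$ as this vertex-ball-type distribution.

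To build such a $\Delta$ I would assemble it from ``typed vertices'' and ``half-edges'': for each $[K]\in\mathcal B_{\Gamma,r}$ with $\vartheta(K)>0$ take $\vartheta(K)$ vertices of type $[K]$, and give a type-$[K]$ vertex one half-edge, carrying the label of the corresponding oriented edge of $\Gamma$, for each edge of $K$ incident to its center $p$. A half-edge arising from an edge $e$ at $p$ has a \emph{reduced type}: the translation class $[J]\in\mathcal B'_{\Gamma,r}$ of the truncation of $K$ to radius $r-\tfrac12$ about the midpoint of $e$, together with the side (origin or terminus of the central edge of $J$) on which $p$ lies. Call two half-edges matchable if they have the same reduced type and opposite sides; gluing such a pair along an edge, with label dictated by the central edge of $J$, is consistent because both ambient trees restrict to the same $J$ near that edge. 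I would then verify that if $\Delta$ is obtained by pairing all half-edges into matchable pairs, then $\Delta$ is automatically folded (distinct half-edges at a vertex carry distinct labels) and cyclically reduced (every $K\in\mathcal B_{\Gamma,r}$ has a center of degree $\ge 2$, as $r\ge 2$ prevents the center from being a leaf), and --- the one step that is not purely formal --- every type-$[K]$ vertex genuinely has radius-$r$ ball of type $[K]$. The latter follows by propagating matchability across successive edges: since $r\ge 2$, matchability across one edge already pins down the incident edges of the neighbour, and inductively the prescribed types agree with the actual picture out to the full radius $r$; the branching is harmless here precisely because $X$ is a tree, so there is no monodromy obstruction to the propagation.

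It remains to pair up all half-edges matchably. I would group them by reduced type $[J]\in\mathcal B'_{\Gamma,r}$; within a group one must biject the origin-side half-edges with the terminus-side ones. Unwinding the definitions --- using that $F_N$ acts freely on $X$, so that a translation class has a unique representative with prescribed center --- one identifies the number of origin-side half-edges of reduced type $[J]$ with $\sum_{(U_1,\dots,U_n)}\vartheta(J\cup U_1\cup\dots\cup U_n)$ and the number of terminus-side ones with $\sum_{(V_1,\dots,V_m)}\vartheta(J\cup V_1\cup\dots\cup V_m)$, where $\{e_1,\dots,e_n\}\sqcup\{f_1,\dots,f_m\}$ is the geometric partition of the terminal edges of $J$; the underlying bijection is the one between tuples $(U_1,\dots,U_n)$ and balls of radius $r$ centered at the origin of the central edge of $J$ that truncate to $J$. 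Hypothesis~(2) asserts precisely that these two counts coincide, so a bijection exists; choosing one for every $[J]$ pairs each half-edge (it lies in exactly one group) and produces a finite folded cyclically reduced, possibly disconnected, $\Gamma$-graph $\Delta$. By the previous step $\Delta$ has exactly $\vartheta(K)$ vertices of radius-$r$ ball type $[K]$ for every $K$, hence $\langle K,\Delta\rangle_\Gamma=\vartheta(K)$; hypothesis~(3) guarantees $\Delta\neq\emptyset$.

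The main obstacle, conceptually, is the reduction in the first step: recognizing that the local-homeomorphism condition collapses ``counting occurrences'' to ``counting balls'', which is what turns the problem into a half-edge matching whose feasibility criterion is exactly the switch conditions~(2). Technically, the most delicate point is the propagation argument that a matchable gluing realizes the prescribed ball types --- this is where care is required, and it is also where the tree structure of $X$ is exploited to neutralize the branching that defeats the classical Euler-circuit approach.
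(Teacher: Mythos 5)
Your proposal is correct and follows essentially the same route as the paper: the paper likewise builds $\Delta$ by taking $\vartheta(K)$ copies of the center of each round graph $K$ together with its ``half-link'' of sub-edges, decorating each sub-vertex with the pair $(\tau(e),[K_e])$ (your ``reduced type with side''), using condition~(2) to match sub-vertices with decoration $(\tau(e_J),[J])$ to those with $(\tau(e_J^{-1}),[J])$, and then propagating link data outward sphere by sphere to identify occurrences of $K$ with vertices whose radius-$r$ ball has type $[K]$. The only differences are cosmetic (half-edge matching versus sub-vertex identification).
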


Note that in part (2) of  Theorem~\ref{thm:A} constructing $J\cup U_1\cup\cdots\cup U_n$ amounts to adding new edges to each terminal vertex of $J$ on one side of the geometric partition, while $J\cup V_1\cup\cdots\cup V_m$ is obtained from $J$ by adding edges to the terminal vertices of the other side of the geometric partition.
Note that also if $J$ is as in part (2) the theorem, then for every $(U_1,\dots,U_m)$ and every $(V_1,\dots,V_m)$ as in the theorem we automatically have 
 that the trees $J\cup U_1\cup\dots \cup U_n$  and $J\cup V_1\cup\dots \cup V_m$ belong to $\mathcal B_{\Gamma,r}$.
Theorem~\ref{thm:A} is a substitute for Proposition~\ref{prop:iw} in
the context of subset currents. However, compared to the context of
the two-sided shift $A^\Z$ considered in Proposition~\ref{prop:iw},
even the statement of Theorem~\ref{thm:A} is considerably more
complicated and it requires a significant change of perspective
to properly account for the ``non-linear" nature of the tree
$X=\widetilde \Gamma$ which replaces $\Z$ here. For the same reason
the proof of Theorem~\ref{thm:A} does not rely on Euler circuit
considerations, but instead uses certain kinds of perfect matching
arguments. Crucially, the proof of Theorem~\ref{thm:A} is
constructive, and therefore it can be used to produce explicit
approximations of various versions of ``random" or ``uniform" subset
currents considered in Section~10 of \cite{KN3} by rational subset
currents (see further discussion below).

Using Theorem~\ref{thm:A}  we obtain a new  proof of one of the main results of \cite{KN3}:
\begin{theor}\label{thm:main}
Let $N\ge 2$ be an integer. Then $\gcnr$  is a dense subset of $\gcn$.
\end{theor}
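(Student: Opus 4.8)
The plan is to deduce Theorem~\ref{thm:main} from Theorem~\ref{thm:A} by a straightforward density/approximation argument. Fix the marking $F_N = \pi_1(\Gamma_A)$ coming from an $N$-rose so that $X = X_A$, and recall that a subset current $\mu \in \gcn$ is uniquely determined by its weights $(\langle K, \mu\rangle_A)_{K \in \mathcal B_A}$, and that a neighborhood basis of $\mu$ in the weak-* topology is given by specifying these weights to within $\epsilon$ on a finite collection of cylinders. Because every $K \in \mathcal B_A$ can be extended (via the splitting formula $(!)$ applied repeatedly at terminal edges) to a disjoint union of larger trees whose pendant vertices all lie at a common distance $r$ from some central vertex, it suffices to approximate $\mu$ simultaneously on all of $\mathcal B_{\Gamma_A, r}$ for a single large $r$: controlling the weights of the "radius-$r$" trees controls, by finite additivity, the weights of all smaller trees. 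So the first step is to reduce the approximation problem to the following: given $\mu \in \gcn$ nonzero, $r \ge 2$, and $\epsilon > 0$, produce a rational current $\nu$ (or a finite nonnegative combination of rational currents, which is the same thing up to the final cleanup) with $|\langle K, \nu\rangle_A - \langle K, \mu\rangle_A| < \epsilon$ for all $K \in \mathcal B_{\Gamma_A, r}$.

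The second step is to replace the real-valued weights $(\langle K, \mu\rangle_A)_{K \in \mathcal B_{\Gamma_A, r}}$ by nearby nonnegative integers satisfying the switch conditions exactly. This is where the only genuine content beyond Theorem~\ref{thm:A} lies. One scales $\mu$ by a large integer $M$, so that $M\langle K, \mu\rangle_A$ is large for the finitely many relevant $K$, and then perturbs these finitely many real numbers to nearby integers $\vartheta(K) \in \mathbb Z_{\ge 0}$ in such a way that the exact switch equations in part~(2) of Theorem~\ref{thm:A} continue to hold. The switch equations are a finite linear system with integer coefficients relating the $\vartheta(K)$; the trick is that the system is "triangular" enough — one can solve for certain weights in terms of others — that a greedy rounding procedure (round the free variables, then determine the dependent ones) produces an honest nonnegative integral solution $\vartheta$ with $|\vartheta(K) - M\langle K, \mu\rangle_A|$ bounded by a constant depending only on $N$ and $r$, not on $M$. (Alternatively, one can observe that the integral points in the rational polyhedral cone cut out by $(!)$ are, after scaling, $\epsilon/M$-dense in the cone — this is a standard fact about rational cones — which gives the same conclusion. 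Nonnegativity of $\vartheta$ is the one point requiring care, and is handled by first passing to a $\mu$ all of whose radius-$r$ weights are strictly positive, e.g. by adding a tiny multiple of a fixed rational current supported on all such cylinders, which costs only $\epsilon$.) Since $M$ can be taken as large as we like, $\vartheta(K)/M$ approximates $\langle K, \mu\rangle_A$ to within $\epsilon$.

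The third step applies Theorem~\ref{thm:A} to $\vartheta$: it yields a finite folded cyclically reduced (possibly disconnected) $\Gamma_A$-graph $\Delta$ with $\langle K, \Delta\rangle_A = \vartheta(K)$ for all $K \in \mathcal B_{\Gamma_A, r}$. Writing $\Delta = \Delta_1 \sqcup \dots \sqcup \Delta_p$ as its connected components and letting $H_1, \dots, H_p \le F_N$ be the corresponding conjugacy classes of finitely generated subgroups (via the bijection between connected cyclically reduced folded $A$-graphs and such conjugacy classes), the identity $\langle K, \eta_{H_i}\rangle_A = \langle K, \Delta_i\rangle_A$ recalled in the introduction gives, upon summing, $\langle K, \sum_i \eta_{H_i}\rangle_A = \vartheta(K)$ for every $K \in \mathcal B_{\Gamma_A, r}$, and hence $\langle K, \tfrac1M \sum_i \eta_{H_i}\rangle_A$ is within $\epsilon$ of $\langle K, \mu\rangle_A$ for all such $K$. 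Combined with Step~1 this shows $\tfrac1M\sum_i \eta_{H_i}$ lies in the chosen weak-* neighborhood of $\mu$. Finally one passes from finite nonnegative combinations of counting currents to genuine rational currents: the set of such finite combinations is clearly contained in the closure of $\gcnr$ (a single $\eta_H$ is rational; closedness under addition of the closure follows since $\gcnr$ is a cone and one can approximate sums $\eta_{H_1} + \eta_{H_2}$ by $\eta_H$ for a single $H$ — or, more simply, one notes directly that the above $\Delta$ can be taken connected by a standard wedging/stabilization trick, making $\tfrac1M \sum_i \eta_{H_i}$ itself rational). Density of $\gcnr$ in $\gcn$ follows.

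The main obstacle is entirely in Step~2: producing exact nonnegative \emph{integral} solutions of the switch system $(!)$ that are close to a prescribed real solution, with the nonnegativity maintained uniformly as the scaling $M \to \infty$. Everything else is bookkeeping: Step~1 is the standard reduction of weak-* approximation to a finite set of cylinders plus the additivity relations, and Step~3 is a direct invocation of Theorem~\ref{thm:A} together with the dictionary between core graphs and counting currents.
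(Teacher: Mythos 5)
Your overall route is the same as the paper's: reduce weak-* approximation to controlling the weights on round graphs of a single large grade $r$ (Corollary~\ref{cor:b}), perturb the real weight vector inside the rational polyhedron $\mathcal Q_{\Gamma,r}$ to a nearby rational point and clear denominators to get an integral $\vartheta$ (your parenthetical ``rational points are dense in a rational polyhedron'' argument is exactly what the paper does, and it already takes care of nonnegativity since the constraints $\vartheta\ge 0$ are part of the polyhedron; your primary ``greedy rounding via triangularity'' suggestion is unnecessary and unsubstantiated), and then apply Theorem~\ref{thm:A} and decompose $\Delta$ into components. Up to that point the proposal is sound.

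The genuine gap is in your last step, the passage from the finite nonnegative combination $\frac1M\sum_i\eta_{H_i}$ to an element of $\gcnr$. Neither of your two justifications works. First, the fact that $\gcnr$ is a cone gives closure of its closure under nonnegative scaling, not under addition: $\gcnr=\{c\eta_H\}$ is not convex, and the closure of a non-convex cone need not be closed under sums, so ``closedness under addition of the closure follows since $\gcnr$ is a cone'' is a non sequitur. Second, the ``wedging/stabilization trick'' does not produce a connected $\Delta$ with the same occurrence counts: an occurrence $\mathfrak O:K\to\Delta$ must satisfy $Lk_K(x)=Lk_\Delta(\mathfrak O(x))$ at every non-terminal vertex, so wedging two components changes the link at the wedge vertex and hence changes the weights $\langle K,\Delta\rangle_\Gamma$ (and the counting current of $H_1\ast H_2^g$ is not $\eta_{H_1}+\eta_{H_2}$). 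The statement you actually need --- that a finite sum of counting currents can be approximated by rational currents, i.e.\ that $\gcnr$ is dense in its linear span --- is a nontrivial theorem; the paper invokes Proposition~5.2 of \cite{KN3}, which is proved by an explicit surgery argument using large finite covers of the components $\Delta_i$ joined so that after rescaling the resulting single counting current converges to the sum. You should either cite that result or reproduce its proof; as written, the final step of your argument does not close.
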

Theorem~\ref{thm:main} generalizes a similar result~\cite{Ka2,Martin} for $\Curr(F_N)$, but the case of $\gcn$ is considerably more difficult. The proof of Theorem~\ref{thm:main} in \cite{KN3} is indirect and relies on deep work of Bowen and Elek about ``unimodular graph measures", that is, measures on spaces of rooted graphs that are invariant, in the appropriate sense, with respect to root-change. Given a free basis $A$ of $F_N$ and the Cayley graph $X_A$ of $F_N$ with respect to $A$, in \cite{KN3} we relate subset currents to root-change invarinat measures on the space $\mathcal T_1(X_A)$ of all infinite subtrees $Y$ of $X_A$ without degree-one vertices such that $Y$ contains the vertex $1$ of $X_A$. For studying  $\mathcal T_1(X_A)$  one can use the results of Bowen~\cite{Bow03,Bow09} and Elek~\cite{Elek}  about weakly approximating these measures by sequences of finite graphs and eventually conclude that $\gcnr$  is dense in $\gcn$. Here we give a direct proof of Theorem~\ref{thm:main}, bypassing the  ``unimodular graph measures" results. The proof shares some similarities with the approaches of Elek and Bowen, but is more combinatorial and explicit.  As another application of Theorem~\ref{thm:A} we solve Problem~10.11 from \cite{KN3}  and obtain:

\begin{theor}[c.f. Theorem~\ref{thm:int}]\label{thm:C}
Let $N\ge 2$ let $\alpha:F_N\to \Gamma$ be a marking on $F_N$ and let $X=\widetilde \Gamma$.  Let $\mu\in \gcn$ be a nonzero subset current such that for every $K\in \mathcal B_\Gamma$ we have $\langle K,\mu\rangle_\Gamma\in \mathbb Z$.

Then there exist $k\ge 1$ and  nontrivial finitely generated subgroups $H_1,\dots, H_k\le F_N$ such that
\[
\mu=\eta_{H_1}+\dots +\eta_{H_k}.
\]

\end{theor}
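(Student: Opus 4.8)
The plan is to reduce the integral weight statement to the Integral Weight Realization Theorem (Theorem~\ref{thm:A}) by finitely truncating the system of weights of $\mu$. First I would fix a marking $\alpha: F_N\to\Gamma$ and, for each integer $r\ge 2$, restrict attention to the finite (mod $F_N$) family of radius-$r$ balls $\mathcal B_{\Gamma,r}$ and define $\vartheta_r: \mathcal B_{\Gamma,r}\to\mathbb Z_{\ge 0}$ by $\vartheta_r(K):=\langle K,\mu\rangle_\Gamma$. The hypothesis that all weights of $\mu$ are integers, combined with the fact that subset-current weights are nonnegative, guarantees $\vartheta_r$ takes values in $\mathbb Z_{\ge 0}$; the $F_N$-invariance of $\mu$ gives condition (1), the switch equations (!) for $\mu$ (specialized to trees of the relevant radius) give condition (2), and nonvanishing of $\mu$ gives condition (3) for $r$ large enough. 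Theorem~\ref{thm:A} then produces a finite folded cyclically reduced $\Gamma$-graph $\Delta_r$ with $\langle K,\Delta_r\rangle_\Gamma=\langle K,\mu\rangle_\Gamma$ for all $K\in\mathcal B_{\Gamma,r}$. Decomposing $\Delta_r$ into its connected components and invoking the bijection between connected cyclically reduced folded $\Gamma$-graphs and conjugacy classes of nontrivial f.g. subgroups, together with the identity $\langle K,\eta_H\rangle=\langle K,\Delta_H\rangle$, we get finitely many subgroups $H_1^{(r)},\dots,H_{k_r}^{(r)}$ with $\eta_{H_1^{(r)}}+\dots+\eta_{H_{k_r}^{(r)}}$ agreeing with $\mu$ on all cylinders $Cyl_\Gamma(K)$, $K\in\mathcal B_{\Gamma,r}$.

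The next step is to pass from these finite-level approximations to an exact equality. I would argue that the total mass of $\mu$ (equivalently $\sum_{K\in\mathbf B_{\Gamma,r}}\langle K,\mu\rangle_\Gamma$ over the finitely many translation classes of radius-$r$ balls through a fixed point, which is independent of $r$) bounds the total number of edges of the $\Delta_r$, so that up to the labeled-isomorphism types that can occur, the $\Delta_r$ range over a \emph{finite} set of graphs independent of $r$ — here one uses that a cyclically reduced folded $\Gamma$-graph with prescribed weight-sum has boundedly many edges, since every edge of $\Delta$ sits in at least one occurrence of some $K\in\mathcal B_{\Gamma,r}$ and the number of such occurrences is controlled. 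Consequently the multiset $\{H_1^{(r)},\dots,H_{k_r}^{(r)}\}$ of conjugacy classes is drawn from a finite pool, so some finite collection $H_1,\dots,H_k$ recurs for infinitely many $r$; for that collection $\eta_{H_1}+\dots+\eta_{H_k}$ and $\mu$ have the same weights on $\mathcal B_{\Gamma,r}$ for arbitrarily large $r$, hence on all of $\mathcal B_\Gamma$, hence are equal as subset currents.

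I expect the main obstacle to be the uniform finiteness/boundedness argument in the second step: getting an a~priori bound on the size (number of edges) of the realizing graph $\Delta_r$ that does not grow with $r$. The naive bound from Theorem~\ref{thm:A}'s construction could in principle depend on $r$, so I would need to show that the size of $\Delta_r$ is controlled purely by the total mass $\mu(\mathfrak C_N)$ (or by $\sum_{[K]\in\mathbf B_{\Gamma,1}}\langle K,\mu\rangle_\Gamma$), using that each vertex of a cyclically reduced folded $\Gamma$-graph contributes a definite amount to the count of occurrences of the radius-$1$ (or radius-$2$) trees and that this count equals the corresponding integer weight of $\mu$. If a fully uniform bound on $\Delta_r$ proves elusive, an alternative is a compactness/limiting argument: the normalized currents $\eta_{H^{(r)}}/\|\cdot\|$ lie in a compact set and one extracts a convergent subsequence, then uses integrality to upgrade convergence to eventual equality — but the cleaner route is the edge-count bound, and making it genuinely independent of $r$ is the crux of the proof.
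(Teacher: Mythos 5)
Your proposal is correct and follows essentially the same route as the paper: truncate to $\mathcal B_{\Gamma,r}$, apply Theorem~\ref{thm:A} to obtain $\Delta_r$, then use a size bound uniform in $r$ together with the finiteness of isomorphism types of folded cyclically reduced $\Gamma$-graphs of bounded size to extract one graph $\Delta$ that works for arbitrarily large $r$, hence for all of $\mathcal B_\Gamma$. The uniform bound you flag as the crux is exactly Lemma~\ref{lem:v} of the paper: every vertex of a finite cyclically reduced folded $\Gamma$-graph is the center of exactly one occurrence of a round graph of grade $r$, so $\#V(\Delta_r)=\sum_{[K]\in\mathbf B_{\Gamma,1}}\langle K,\mu\rangle_\alpha$ independently of $r$.
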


One of the main reasons for writing this paper was to provide a proof
of Theorem~\ref{thm:main} that is complete and relatively easily understandable by
the $Out(F_N)$ community. The technology developed here has already
proved useful in a new paper of Sasaki~\cite{Sas} who solved a problem
posed in \cite{KN3} and related subset
currents to the Strengthened Hanna Neumann Conjecture.  Other potential
applications include, for example, studying the ``generic volume
distortion factors'' for free group automorphisms.  In Section~9 of \cite{KN3} we
constructed several versions of ``uniform subset currents'' on $F_N$
corresponding to a free basis $A$ of $F_N$, including the ``absolute
uniform current'' $m_A^{\mathcal S}\in\gcn$.  If $T_A\in\cvn$ is the
Cayley tree of $F_N$ with respect to $A$ and if $\phi\in\Out(F_N)$, it
should be possible to interpret the geometric intersection number (as
defined in \cite{KN3}) $\langle T_A, \phi m_A^{\mathcal S}\rangle$  as
the ``generic volume distortion''
$\vol(\Delta_A(\phi(H)))/\vol(\Delta_A(H))$. Here $H$ is a ``random'',
in the sense of projectively approximating $m_A^{\mathcal S}$ finitely
generated subgroup of $F_N$, and $\Delta_A(H)$ is the core Stallings
subgroup graph for $H$ with respect to $A$. In order to provide such a
characterization of  $\langle T_A, \phi m_A^{\mathcal S}\rangle$ one
needs to create a random process, at step $n$ outputting a subgroup
$H_n\le F_N$ such that almost surely $\lim_{n\to\infty}
[\eta_{H_n}]=[m_A^{\mathcal S}]$ in $\pgcn$. (Here for a subset
current $\mu\in \gcn$ we denote by $[\mu]\in \pgcn$ the
\emph{projective class} of $\mu$.) To do that one needs an
explicit procedure for how to projectively approximate $m_A^{\mathcal S}$
by rational subset currents, and Theorem~\ref{thm:A} provides such a procedure.

In addition, there is work in progress by Dounnu Sasaki on developing
the theory of subset currents for surface groups. Obtaining an analog
of Theorem~\ref{thm:main} remains an open problem in that context, and
we hope that an explicit proof of Theorem~\ref{thm:main} for free
groups may prove useful there.

I am particularly grateful to the referee for the careful reading of
the paper and for the detailed and helpful suggestions.

\section{Background}\label{sec:background}

We will use the same notations, conventions and definitions as in \cite{KN3} and only briefly recall some of them here.
If $Y$ is a graph, we denote by $EY$ the set of oriented edges of $Y$. For $e\in EY$ $o(e)$ is the initial vertex of $e$, $t(e)$ is the terminal vertex of $e$ and $e^{-1}\in EY$ is the inverse edge of $e$.

\subsection{The space $\mathfrak C_N$}

Let $F_N$ be a free group of finite rank $N\ge 2$. The space $\mathfrak C_N$ consists of all closes subsets $S\subseteq \partial F_N$ such that $S$ consists of at least two points.
We topologize $\mathfrak C_N$ by choosing a visual metric $d$ on $\partial F_N$ and then using the Hausdorff distance between closed subsets of $\partial F_N$ to metrize $\mathfrak C_N$.  This metric topology on $\mathfrak C_N$ does not depend on the choice of a visual metric on $\partial F_N$ and turns $\mathfrak C_N$ into a locally compact totally disconnected Hausdorff topological space.  The topology on $\mathfrak C_N$ can be described more explicitly in terms of the ``subset cylinders". 
Given a marking $\alpha: F_N\isom \pi_1(\Gamma)$ (where $\Gamma$ is a finite connected graph without degree-one and degree-two vertices), let $X=\widetilde \Gamma$, taken with the simplicial metric, where every edge has length $1$. Then $\alpha$ induces a quasi-isometry between $F_N$ and $X$ and hence gives an identification, via an $F_N$-equivariant homeomorphism, between $\partial F_N$ and $\partial X$. As in~\cite{KN3}, we denote by $\mathcal K_\Gamma$ the set of all finite non-degenerate subtrees $K\subseteq X$.
If $e$ is an oriented edge of $X$, we denote by $Cyl_X(e)$ the set of
all $\xi\in\partial F_N$ such that the geodesic from $o(e)$ to $\xi$
in $X$ starts with $e$.  Thus $Cyl_X(e)\subseteq \partial F_N$ is a
compact-open subset of $\partial F_N$.  Now let $K\in \mathcal
K_\Gamma$.  Let $e_1,\dots, e_n\in EX$ be all the terminal edges of
$K$ (oriented ``from'' $K$).
We define the \emph{subset cylinder} $\mathcal SCyl_\alpha(K)\subseteq \mathfrak C_N$ as the set of all $S\in \mathfrak C_N$ such that $S\subseteq \cup_{i=1}^n Cyl_X(e_i)$ and such that for each $i=1,\dots, n$ $S\cap Cyl_X(e_i)\ne \emptyset$. Then $\mathcal SCyl_\alpha(K)$ is a compact-open subset of $\mathfrak C_N$ and the family $\{\mathcal SCyl_\alpha(K)| K\in \mathcal K_\Gamma\}$ forms a basis for the topology on $\mathfrak C_N$ defined above.

Denote by $q(e)$ the set of all oriented edges $e'$ in $X$ such that $e,e'$ is a reduced edge-path in $X$.  For any set $B$ we denote by $P_+(B)$ the set of all nonempty subsets of $B$.  The following basic fact plays a key role in the theory of subset currents:

\begin{lem}\label{lem:disj}[c.f. Lemma~3.5 in \cite{KN3}]
Let $K\in \mathcal K_\Gamma$ and let $e_1,\dots, e_n$ be all the terminal edges of $K$. 
Then for every $i=1,\dots, n$ we have $\displaystyle \mathcal SCyl_\alpha(K)=\sqcup_{U\in P_+(q(e_i))} \mathcal SCyl_\alpha(K\cup U)$.
\end{lem}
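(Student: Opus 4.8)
The plan is to reduce everything to one elementary property of the tree $X=\widetilde\Gamma$: for every oriented edge $e$ of $X$,
\[
Cyl_X(e)=\bigsqcup_{e'\in q(e)} Cyl_X(e'),
\]
which holds because a geodesic ray issuing from $o(e)$ and beginning with $e$ continues, past $t(e)$, with exactly one non-backtracking edge, i.e.\ with exactly one $e'\in q(e)$, while conversely every such $e'$ prepends to $e$. Besides this I will use two standard facts about $X$ (cf.\ \cite{KN3}): two distinct oriented edges sharing an initial vertex have disjoint cylinders, and if $e_1,\dots,e_n$ are the terminal edges of a finite non-degenerate subtree of $X$, oriented to point away from that subtree, then $Cyl_X(e_1),\dots,Cyl_X(e_n)$ are pairwise disjoint.

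First I would record the combinatorics of $K\cup U$: since $t(e_i)$ is a leaf of $K$ and each $e'\in q(e_i)$ points from $t(e_i)$ to a vertex outside $K$, for every $U\in P_+(q(e_i))$ the graph $K\cup U$ is again a finite non-degenerate subtree of $X$, and its set of terminal edges is precisely $(\{e_1,\dots,e_n\}\setminus\{e_i\})\cup U$ --- the edge $e_i$ ceases to be terminal, the edges of $U$ become terminal, and the remaining $e_j$ are unaffected (the degenerate case where $K$ is a single edge should be checked separately, but works the same way). With this in hand the inclusion ``$\supseteq$'' is immediate: if $S\in \mathcal SCyl_\alpha(K\cup U)$ then, since $\bigcup_{e'\in U}Cyl_X(e')\subseteq Cyl_X(e_i)$, we get $S\subseteq \bigcup_{j\ne i}Cyl_X(e_j)\cup Cyl_X(e_i)=\bigcup_{j=1}^n Cyl_X(e_j)$, and $S$ meets each $Cyl_X(e_j)$ with $j\ne i$ as well as some $Cyl_X(e')\subseteq Cyl_X(e_i)$, hence meets every $Cyl_X(e_j)$; so $S\in\mathcal SCyl_\alpha(K)$.

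For ``$\subseteq$'' together with disjointness, given $S\in\mathcal SCyl_\alpha(K)$ I would set $U(S):=\{e'\in q(e_i): S\cap Cyl_X(e')\ne\emptyset\}$. The displayed decomposition of $Cyl_X(e_i)$ shows at once that $U(S)\ne\emptyset$, that $S\subseteq \bigcup_{j\ne i}Cyl_X(e_j)\cup\bigcup_{e'\in U(S)}Cyl_X(e')$, and that $S$ meets every terminal cylinder of $K\cup U(S)$; hence $S\in\mathcal SCyl_\alpha(K\cup U(S))$, which gives the reverse inclusion. Finally, I would argue that $S\in\mathcal SCyl_\alpha(K\cup U)$ actually forces $U=U(S)$: the containment $U\subseteq U(S)$ is clear since $S$ meets $Cyl_X(e')$ for every terminal edge $e'\in U$ of $K\cup U$, and if some $e''\in U(S)\setminus U$ existed, a point $\xi\in S\cap Cyl_X(e'')\subseteq Cyl_X(e_i)$ would, by $S\in\mathcal SCyl_\alpha(K\cup U)$, also lie in $\bigcup_{j\ne i}Cyl_X(e_j)\cup\bigcup_{e'\in U}Cyl_X(e')$ --- contradicting either disjointness of the terminal cylinders of $K$ (via $Cyl_X(e'')\subseteq Cyl_X(e_i)$) or disjointness of cylinders at the common vertex $t(e_i)$. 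Thus the sets $\mathcal SCyl_\alpha(K\cup U)$, $U\in P_+(q(e_i))$, are pairwise disjoint, completing the proof.

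I do not expect a serious obstacle here: the only genuine geometric input is the three elementary facts about the tree $X$ quoted at the start, and the rest is bookkeeping. The step requiring the most care is keeping straight which edges are terminal in $K\cup U$, and with which orientation, so that the definition of $\mathcal SCyl_\alpha(K\cup U)$ is applied to the correct collection of cylinders.
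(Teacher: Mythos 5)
Your proof is correct. The paper itself gives no argument for this lemma --- it is quoted from \cite{KN3} (Lemma~3.5 there) --- and your write-up supplies exactly the expected verification: the decomposition $Cyl_X(e)=\sqcup_{e'\in q(e)}Cyl_X(e')$, the bookkeeping of terminal edges of $K\cup U$, and the observation that $S\in\mathcal SCyl_\alpha(K)$ determines $U=U(S)$ uniquely, which yields both the reverse inclusion and the disjointness.
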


\subsection{Subset currents}

A \emph{subset current} on $F_N$ is a positive Borel measure measure $\mu$ on $\mathfrak C_N$ which is $F_N$-invariant and locally finite, that is, finite on all compact subsets of $\mathfrak C_N$.

The set of all subset currents on $F_N$ is denoted $\gcn$. The space $\gcn$ is endowed with the natural weak-* topology of point-wise convergence of integrals of continuous functions. The weak-* topology on $\gcn$ can be described in more concrete terms:

Let $\mu, \mu_n\in \gcn$.  Then $\lim_{n\to\infty} \mu_n=\mu$ in $\gcn$ if and only if for every finite non-degenerate subtree $K$ of $X$ we have
\[
\lim_{n\to\infty} \mu_n(\mathcal SCyl_\alpha(K))= \mu(\mathcal SCyl_\alpha(K)).
\]

For $K\in \mathcal K_\Gamma$ and $\mu\in\gcn$ denote $\langle K, \mu\rangle_\alpha:=mu(\mathcal SCyl_\alpha(K))$ and call this quantity the \emph{weight} of $K$ in $\mu$.
If $\mu,\mu'\in \gcn$ satisfy $\langle K, \mu\rangle_\alpha=\langle K, \mu'\rangle_\alpha$ for all $K\in \mathcal K_\Gamma$, then $\mu=\mu'$. Note that if $K\in \mathcal K_\Gamma$ and $g\in F_N$ then $g\mathcal SCyl_\alpha(K)=\mathcal SCyl_\alpha(gK)$. Hence for any $\mu\in\gcn$, $g\in F_N$ and $K\in\mathcal K_\Gamma$ we have $\mu(\mathcal SCyl_\alpha(K))=\mu(g\mathcal SCyl_\alpha(K))$, so that $\langle K, \mu\rangle_\alpha=\langle gK, \mu\rangle_\alpha$.
For a given finite subtree $K$ of $X$, we denote the $F_N$-translation class of $K$ by $[K]$ (so that $[K]$ consists of all the translates of $K$ by elements of $F_N$). 
We put $\langle [K], \mu\rangle_\alpha:=\langle K, \mu\rangle_\alpha$
and call it the \emph{weight} of $[K]$ in $\mu$.

Lemma~\ref{lem:disj} immediately implies (c.f. Proposition~3.11 in \cite{KN3}):
\begin{prop}[Kirchhoff formulas for weights]\label{prop:kirch}
Let $K$ be a finite non-degenerate subtree of $X$. Let $e$ be one of the terminal edges of $K$ and let $\mu\in \gcn$.
Then
\[
\langle K, \mu\rangle_\alpha=\sum_{U\in P_+(q(e))} \langle K\cup U, \mu\rangle_\alpha.\tag{$\bigstar$}
\]

\end{prop}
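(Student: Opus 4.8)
The plan is to obtain $(\bigstar)$ as an immediate consequence of Lemma~\ref{lem:disj} together with (finite) additivity of the Borel measure $\mu$. First I would fix the terminal edge $e$ of $K$ appearing in the statement and apply Lemma~\ref{lem:disj} with $e$ as the distinguished terminal edge, which yields the decomposition
\[
\mathcal SCyl_\alpha(K)=\bigsqcup_{U\in P_+(q(e))}\mathcal SCyl_\alpha(K\cup U)
\]
as a \emph{disjoint} union. I would record two routine observations about this decomposition. First, for each $U\in P_+(q(e))$ the set $K\cup U$ is again a finite non-degenerate subtree of $X$: we are only attaching at the vertex $t(e)$ a nonempty family of edges $e'$ with $e,e'$ a reduced path, so the union stays a tree, and it is non-degenerate because $K$ already contains the edge $e$. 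Hence $K\cup U\in\mathcal K_\Gamma$ and the weight $\langle K\cup U,\mu\rangle_\alpha=\mu(\mathcal SCyl_\alpha(K\cup U))$ on the right-hand side of $(\bigstar)$ is defined. Second, the index set $P_+(q(e))$ is finite: $q(e)$ has $\deg_X(t(e))-1<\infty$ elements (equal to $2N-1$ when $\Gamma$ is the $N$-rose), so $P_+(q(e))$ has $2^{\#q(e)}-1$ elements, and the disjoint union above is finite.

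Next I would note that $\mathcal SCyl_\alpha(K)$ and each $\mathcal SCyl_\alpha(K\cup U)$ is a compact-open, hence Borel, subset of $\mathfrak C_N$, so $\mu$ is defined and finite on all of them by local finiteness. Applying $\mu$ to the finite disjoint union decomposition and using finite additivity of $\mu$ then gives
\[
\langle K,\mu\rangle_\alpha=\mu\big(\mathcal SCyl_\alpha(K)\big)=\sum_{U\in P_+(q(e))}\mu\big(\mathcal SCyl_\alpha(K\cup U)\big)=\sum_{U\in P_+(q(e))}\langle K\cup U,\mu\rangle_\alpha,
\]
which is precisely $(\bigstar)$.

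There is essentially no obstacle here: the entire content of the statement is packaged in Lemma~\ref{lem:disj}, and the remaining steps are formal. The only points that deserve a word of care are that the union in Lemma~\ref{lem:disj} is finite (so that plain finite additivity, not countable additivity, is what is invoked), that all the subset cylinders involved are measurable, and that each $K\cup U$ lies in $\mathcal K_\Gamma$ so that the weights on the right-hand side of $(\bigstar)$ make sense. I would also remark, for context, that these "Kirchhoff" identities $(\bigstar)$ play the role for subset currents that the classical switch conditions $(\dag)$ play for shift-invariant measures, and that it is these identities, together with the edge-centered refinements in part~(2) of Theorem~\ref{thm:A}, that the integral weight realization theorem must reproduce.
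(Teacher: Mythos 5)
Your proof is correct and follows exactly the route the paper takes: the identity $(\bigstar)$ is obtained by applying $\mu$ to the finite disjoint-union decomposition of Lemma~\ref{lem:disj} and invoking finite additivity. The extra checks you record (finiteness of $P_+(q(e))$, measurability of the cylinders, and $K\cup U\in\mathcal K_\Gamma$) are routine but harmless elaborations of the same argument.
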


\subsection{$\Gamma$-graphs}

Let $\alpha: F_N\isom \pi_1(\Gamma)$ be a marking. A \emph{$\Gamma$-graph} is a graph $\Delta$
together with a graph morphism $\tau:\Delta\to\Gamma$. For a vertex $x\in V\Delta$ we sat that the \emph{type} of $x$ is the
vertex $\tau(x)\in V\Gamma$. Similarly, for an oriented edge $e\in
E\Gamma$ the \emph{type} of $e$, or the \emph{label} of $e$ is the
edge $\tau(e)$ of $\Gamma$. Every covering of $\Gamma$ has a canonical $\Gamma$-graph
structure. In particular, $\Gamma$ itself is a $\Gamma$-graph and so
is the universal cover $\widetilde\Gamma$ of $\Gamma$. Also, every
subgraph of a $\Gamma$-graph is again a $\Gamma$-graph.

Let $\tau_1:\Delta_1\to\Gamma$ and $\tau_2:\Delta_2\to\Gamma$ be $\Gamma$-graphs. A graph-map
$f:\Delta_1\to\Delta_2$ is called a \emph{$\Gamma$-map}, or
\emph{$\Gamma$-morphism}, if it respects the labels of vertices and
edges, that is if $\tau_1=\tau_2\circ f$. A $\Gamma$-graph $\Delta$ is \emph{folded} if the labeling map  $\tau:\Delta\to\Gamma$ is an immersion, that is, if $\tau$ is locally injective.

\begin{defn}[Link of a vertex]
Let $\Delta$ be a $\Gamma$-graph. For a vertex $x\in V\Delta$ denote
by $Lk_\Delta(x)$ (or just by $Lk(x)$) the function
\[
Lk_\Delta(x): E\Gamma\to\mathbb Z_{\ge 0}
\]
where for every $e\in E\Gamma$ the value
$\left(Lk_\Delta(x)\right)(e)$ is the number of edges of $\Delta$ with
origin $x$ and label $e$.
\end{defn}

Thus a $\Gamma$-graph $\Delta$ is folded if and only if for every
vertex $x\in V\Delta$ and every $e\in E\Gamma$ we have
\[
\left(Lk_\Delta(x)\right)(e)\le 1.
\]

If $\Delta$ is folded, we will also think of $Lk_\Delta(x)$ as a
subset of $E\Gamma$ consisting of all those $e\in E\Gamma$ with
$\left(Lk_\Delta(x)\right)(e)=1$, that is, of all $e\in E\Gamma$ such
that there is an edge in $\Delta$ with origin $x$ and label $e$.

We say that a nonempty finite $\Gamma$-graph $\Delta$ is \emph{cyclically reduced} if $\Delta$ is folded and every vertex of $\Delta$ has degree $\ge 2$. If $\tau:\Delta\to \Gamma$ is a cyclically reduced $\Gamma$-graph, then $W:=\tau_\#(\pi_1(\Delta))\le \pi_1(\Gamma)$ is a finitely generated subgroup of $\pi_1(\Gamma)$. Recall that we also have a marking $\alpha: F_N\isom \pi_1(\Gamma)$. We say that the subgroup $H:=\alpha^{-1}(W)\le F_N$ is \emph{represented} by $\Delta$. The conjugacy class of $[H]$ in $F_N$ does not change if we replace $\alpha$ by an equivalent marking.

%Informally, we think of cyclically reduced $\Gamma$-graphs as
%generalizations of cyclic words, over the ``alphabet'' $\Gamma$. We
%need the following analog of the notion of a subword in this context:

\begin{defn}[Occurrence]\label{defn:occur}
Let $K\subseteq \widetilde \Gamma$ be a finite non-degenerate subtree
(recall that $\widetilde \Gamma$ and all of its subgraphs have
canonical $\Gamma$-graph structure).

Let $\Delta$ be a finite cyclically reduced  $\Gamma$-graph. 
An \emph{occurrence} of $K$ in $\Delta$ is a $\Gamma$-morphism
$\mathfrak O:K\to\Delta$ such that for every vertex $x$ of $K$ of degree at
least $2$ in $K$ we have $Lk_K(x)=Lk_\Delta(\mathfrak O(x))$.

We denote the number of all occurrences of $K$ in $\Delta$ by $\langle
K; \Delta\rangle_\Gamma$, or just $\langle
K; \Delta\rangle$.
\end{defn}

In topological terms, a $\Gamma$-morphism $\mathfrak O:K\to\Delta$ is an occurrence
of $K$ in $\Delta$ if $\mathfrak O$ is an immersion and if $\mathfrak O$ is a covering map
at every point $x\in K$ (including interior points of edges) except
for the degree-1 vertices of $K$. That is, for every $x\in K$, other
than a degree-1 vertex of $K$, $\mathfrak O$ maps a small neighborhood of $x$ in
$K$ homeomorphically \emph{onto} a small neighborhood of $\mathfrak O(x)$ in
$\Delta$. We need the following key fact from~\cite{KN3}:

\begin{propdfn}\label{prop:wd}
Let $\alpha:F_N\to\pi_1(\Gamma)$ be a marking on $F_N$ and let
$X=\widetilde\Gamma$. Recall that $\mathcal K_\Gamma$ is the set of
all non-degenerate finite simplicial subtrees of $X$.
Let $\tau:\Delta \to\Gamma$ be a finite cyclically reduced $\Gamma$-graph.

Then there is a unique generalized
current $\mu_\Delta\in\gcn$ such that for every $K\in \mathcal
K_\Delta$
\[
\langle K, \mu_\Delta\rangle_\alpha=\langle K;\Delta\rangle_\Gamma
\]
Moreover, if $\Delta$ is also connected, then $\mu_\Delta=\eta_H$, where $H\le F_N$ is the finitely generated subgroup of $F_N$ represented by $\Delta$.
\end{propdfn}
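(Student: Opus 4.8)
The plan is to deduce the statement from two facts recorded above: a subset current on $\FN$ is determined by its weights $\langle K,\cdot\rangle_\alpha$, $K\in\mathcal K_\Gamma$; and for every nontrivial finitely generated $H\le\FN$ the counting current $\eta_H$ is a bona fide element of $\gcn$. Uniqueness of $\mu_\Delta$ is then immediate. For existence I would first reduce to the connected case. Writing $\Delta=\Delta_1\sqcup\dots\sqcup\Delta_k$ for the decomposition into connected components, each $\Delta_i$ is again finite and cyclically reduced, and since a non-degenerate finite subtree $K$ is connected, any occurrence $\mathfrak O:K\to\Delta$ has image inside a single $\Delta_i$; hence $\langle K;\Delta\rangle_\Gamma=\sum_{i=1}^k\langle K;\Delta_i\rangle_\Gamma$ for all $K\in\mathcal K_\Gamma$. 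Each $\Delta_i$, having no degree-one vertices, represents a nontrivial finitely generated $H_i\le\FN$. So it suffices to prove, in the connected case, that $\langle K;\Delta\rangle_\Gamma=\langle K,\eta_H\rangle_\alpha$ for all $K\in\mathcal K_\Gamma$, where $H$ is the subgroup represented by $\Delta$: granting this, $\mu_\Delta:=\eta_{H_1}+\dots+\eta_{H_k}\in\gcn$ has the asserted weights, and when $\Delta$ is connected $\mu_\Delta=\eta_H$, which is the ``moreover'' clause.

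Assume then that $\Delta$ is connected and cyclically reduced, with $H=\alpha^{-1}(\tau_\#\pi_1(\Delta))$. By the Stallings theory of folded graphs \cite{KM,St}, the immersion $\tau:\Delta\to\Gamma$ realizes $\Delta$ as the core of the covering of $\Gamma$ with fundamental group $\tau_\#\pi_1(\Delta)$; transported through $\alpha$, this identifies $\Delta$ with $Y_H/H$, where $Y_H\subseteq X=\widetilde\Gamma$ is the convex hull of the limit set $\Lambda(H)\subseteq\partial\FN=\partial X$ and $H$ acts freely on the tree $Y_H$ with $p:Y_H\to\Delta$ the universal covering (deck group $H$). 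I would next establish a convex-hull description of cylinders: for $S\in\mathfrak C_N$ with convex hull $Y_S$ and $K\in\mathcal K_\Gamma$ with terminal edges $e_1,\dots,e_n$, one has $S\in\mathcal SCyl_\alpha(K)$ iff $K\subseteq Y_S$ and $Lk_{Y_S}(x)=Lk_K(x)$ at every non-terminal vertex $x$ of $K$ --- the geodesic from a fixed vertex of $K$ to any $\xi\in S$ leaves $K$ through exactly one terminal edge, which yields $S\subseteq\bigcup_iCyl_X(e_i)$ together with the link equalities, while $Y_S$ having no degree-one vertices forces each $t(e_i)$ to carry an edge leading away from $K$, so $S\cap Cyl_X(e_i)\neq\emptyset$; the converse runs the same argument backwards.

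Using $\eta_H=\sum_{gH\in\FN/H}\delta_{g\Lambda(H)}$ when $H=Comm_{\FN}(H)$, together with the fact that the convex hull of $g\Lambda(H)$ equals $gY_H$, the cylinder description gives
\[
\langle K,\eta_H\rangle_\alpha=\#\bigl\{\,gH\in\FN/H:\ K\subseteq gY_H \text{ and } Lk_K(x)=Lk_{gY_H}(x)\text{ for all non-terminal }x\in VK\,\bigr\},
\]
which after the substitution $g\mapsto g^{-1}$ becomes the number of right cosets $Hg$ such that $gK\subseteq Y_H$ and $Lk_{gK}(y)=Lk_{Y_H}(y)$ at every non-terminal vertex $y$ of $gK$; the general case of $\eta_H$ reduces to this via $\eta_H=[Comm_{\FN}(H):H]\,\eta_{Comm_{\FN}(H)}$, using that these two subgroups share the tree $Y_H$ and that this index cancels against the corresponding index of ``fill placements''. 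On the other hand, an occurrence $\mathfrak O:K\to\Delta$ lifts through $p$ (as $K$ is simply connected) to a map $\widetilde{\mathfrak O}:K\to Y_H$, again an immersion (a lift of an immersion through a covering), so $\widetilde{\mathfrak O}$ is injective, being an immersion of one tree into another; since a label-preserving morphism of a connected subtree of $X$ into $X$ is the restriction of a unique deck transformation $g\in\FN$, we get $\widetilde{\mathfrak O}=g|_K$ and $gK\subseteq Y_H$, and pushing the occurrence's link condition through the local homeomorphism $p$ turns it into precisely $Lk_{gK}(y)=Lk_{Y_H}(y)$ at every non-terminal $y\in V(gK)$. Conversely each such $g$ yields the occurrence $p\circ(g|_K)$, and two elements $g,g'$ give the same occurrence iff $g'\in Hg$. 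This exhibits a bijection between occurrences of $K$ in $\Delta$ and the right-coset set above, so $\langle K;\Delta\rangle_\Gamma=\langle K,\eta_H\rangle_\alpha$ for all $K$, completing the connected case.

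The step I expect to be the main obstacle is the one just sketched: getting the convex-hull characterization of $\mathcal SCyl_\alpha(K)$ right in all cases (including $K$ a single edge, where there are no link conditions), and then verifying with care that ``lift the occurrence, extend to a deck transformation'' is a genuine bijection --- this requires handling links as subsets of $E\Gamma$, keeping the left/right coset bookkeeping consistent, and absorbing the commensurator index. By contrast the measure-theoretic content is essentially free here: existence of $\mu_\Delta$ as an honest subset current is inherited from the known fact that each $\eta_{H_i}\in\gcn$, and its uniqueness from the fact that weights separate subset currents.
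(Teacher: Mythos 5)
Your proposal is correct; note that the paper itself contains no proof of this Proposition--Definition --- it is imported as a ``key fact from \cite{KN3}'' --- and your argument is essentially the standard one carried out there: reduce to connected $\Delta$, identify $\Delta$ via Stallings' theory with $Y_H/H$ for $Y_H$ the convex hull of $\Lambda(H)$, characterize $\mathcal SCyl_\alpha(K)$ by the convex-hull and link conditions, and match occurrences of $K$ in $\Delta$ with cosets by lifting through the covering $Y_H\to\Delta$. The one step I would phrase more carefully is the commensurator ``cancellation'': the cleanest route is to note that $\Delta_H\to\Delta_{Comm_{F_N}(H)}$ is an $m$-fold covering with $m=[Comm_{F_N}(H):H]$, so each occurrence of the simply connected $K$ in $\Delta_{Comm_{F_N}(H)}$ lifts to exactly $m$ occurrences in $\Delta_H$, which matches $\eta_H=m\,\eta_{Comm_{F_N}(H)}$.
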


\section{More on cylinders and Kirchhoff-type formulas}

\begin{conv}
From now and for the remainder of this paper, unless specified
otherwise, we fix a marking $\alpha: F_N\to \pi_1(\Gamma)$. Put
$X=\widetilde \Gamma$. We also equip $X$ with the simplicial metric
$d$, by giving each edge of $X$ length $1$.
\end{conv}

Let $K\subseteq X$ be a nondegenerate finite subtree and let $e$ be a
terminal edge of $K$. For an integer $m\ge 1$ we say that a finite
nondegenerate subtree $U\subseteq X$ is \emph{$(K,e,m)$-admissible}
if:
\begin{enumerate}
\item We have $K\cap U=\{t(e)\}$.
\item For every terminal vertex $v$ of $U$ such that $v\ne t(e)$ we
  have $d(t(e),v)=m$.
\end{enumerate}
For $m=0$ we also say that the degenerate tree $U=\{t(e)\}$ is
\emph{$(K,e,0)$-admissible}.

For $m\ge 1$ we denote by $\mathcal B(K,e,m)$ the set of all $U$ such that $U$ is
$(K,e,m)$-admissible. Thus  $P_+(q(e))=\mathcal B(K,e,1)$. Lemma~\ref{lem:disj} easily implies:

\begin{cor}\label{cor:m-disj}
Let  $K\subseteq X$ be a nondegenerate finite subtree and let $e$ be a
terminal edge of $K$. Then for every integer $m\ge 1$ we have
\[
\mathcal SCyl_\alpha(K)=\sqcup_{U\in \mathcal B(K,e,m)} \mathcal SCyl_\alpha(K\cup U).
\]
\end{cor}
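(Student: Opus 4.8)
\textbf{Proof proposal for Corollary~\ref{cor:m-disj}.}

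The plan is to prove this by induction on $m\ge 1$, using Lemma~\ref{lem:disj} as the base case and as the inductive engine. For $m=1$ the statement is exactly Lemma~\ref{lem:disj}, since $\mathcal B(K,e,1)=P_+(q(e))$ by the observation recorded just before the corollary, and $K\cup U$ for $U\in P_+(q(e))$ ranges over precisely $K\cup U$ for $U$ a nonempty subset of $q(e)$.

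For the inductive step, suppose the formula holds for some $m\ge 1$; I would deduce it for $m+1$. Start from
\[
\mathcal SCyl_\alpha(K)=\bigsqcup_{U\in \mathcal B(K,e,m)} \mathcal SCyl_\alpha(K\cup U).
\]
Fix $U\in\mathcal B(K,e,m)$ and let $e_1,\dots,e_s$ be the terminal edges of $K\cup U$ other than those already present as terminal edges of $K$ not equal to $e$; more precisely, let $e_1,\dots,e_s$ be the terminal edges of $K\cup U$ lying in $U$ (so each $e_j$ has $d(t(e),t(e_j))=m$ when $U$ is non-degenerate, and when $m$ is such that $U=\{t(e)\}$ one takes $e$ itself). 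Applying Lemma~\ref{lem:disj} successively to $K\cup U$ at each terminal edge $e_1,\dots,e_s$ in turn — which is legitimate since at each stage the tree is still a non-degenerate finite subtree of $X$ and $e_j$ is still one of its terminal edges — yields
\[
\mathcal SCyl_\alpha(K\cup U)=\bigsqcup_{(W_1,\dots,W_s)} \mathcal SCyl_\alpha\big(K\cup U\cup W_1\cup\dots\cup W_s\big),
\]
where $W_j$ ranges over $P_+(q(e_j))$. One checks that $U':=U\cup W_1\cup\dots\cup W_s$ is exactly a $(K,e,m+1)$-admissible tree, and conversely every $(K,e,m+1)$-admissible tree arises uniquely in this way from a unique $(K,e,m)$-admissible $U$ (namely the radius-$m$ truncation of $U'$ around $t(e)$) together with the data of the $W_j$'s. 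Substituting back into the inductive hypothesis and using that a disjoint union of disjoint unions is a disjoint union gives
\[
\mathcal SCyl_\alpha(K)=\bigsqcup_{U'\in \mathcal B(K,e,m+1)} \mathcal SCyl_\alpha(K\cup U'),
\]
completing the induction.

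The only genuinely delicate point is the bijective bookkeeping between $(K,e,m+1)$-admissible trees and pairs consisting of a $(K,e,m)$-admissible tree plus a choice of one-step extensions at each of its new terminal edges; one must verify that the extensions at distinct terminal edges $e_j$ do not interfere (they cannot, since $X$ is a tree and the edges $e_j$ point away from $t(e)$ into disjoint subtrees) and that the condition ``$d(t(e),v)=m+1$ for every terminal vertex $v\ne t(e)$'' is equivalent to ``$U$ is $(K,e,m)$-admissible and each terminal vertex of $U$ at distance exactly $m$ has been extended by one further edge.'' This is routine given that $X$ is a tree with all edges of length $1$, but it is the step that carries the content. Everything else is a formal manipulation of disjoint unions. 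Alternatively, and perhaps more cleanly, one can avoid the induction entirely by applying Lemma~\ref{lem:disj} repeatedly, ``peeling'' one layer of edges at a time uniformly at all current terminal edges descending from $e$; the disjointness of the resulting refinement at each stage is immediate from Lemma~\ref{lem:disj}, and after $m$ stages the index set is visibly $\mathcal B(K,e,m)$.
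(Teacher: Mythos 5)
Your proposal is correct, and it fills in exactly the argument the paper intends: the paper states Corollary~\ref{cor:m-disj} with only the remark that ``Lemma~\ref{lem:disj} easily implies'' it, and the natural justification is precisely your induction on $m$ (equivalently, your ``peeling one layer at a time'' reformulation), repeatedly refining each cylinder $\mathcal SCyl_\alpha(K\cup U)$ at the new terminal edges of $U$. The bookkeeping you flag — that extensions at distinct frontier edges live in disjoint subtrees of $X$ and that the radius-$m$ truncation recovers $U$ from $U'$ — is the right content check, so there is nothing to add.
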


\begin{defn}[Round graph]
For an integer $r\ge 1$, we say that a finite subtree $K$ of $X$ is a
\emph{round graph} of \emph{grade $r$} in $X$ if there exists a (necessarily unique) vertex
$v$ of $K$ such that for every terminal vertex $u$ of $K$ we have
$d(v,u)=r$. 
\end{defn}

Let $K\subseteq X$ be a nondegenerate finite subtree and let $v$ be a
vertex of $K$ (possibly a terminal vertex). We denote by $R(K,v)$ the maximum of $d(v,v')$ where
$v'$ varies over all terminal vertices of $K$. The fact that $K$ is
nondegenerate means that $R(K,v)\ge 1$. 

Let $e_1,\dots, e_n$ be the terminal edges of $K$ and let $r\ge
R(K,v)$ be an integer. We say that an $n$-tuple $\mathcal
T=(U_1,\dots,U_n)$ of finite subtrees $U_i$ of $X$ is
\emph{$(K,v,r)$-admissible} if for each $i=1,\dots, n$ the tree $U_i$
is $(K,e_i,m_i)$-admissible, where $m_i=r-d(v,t(e_i))$. 
Note that if $\mathcal
T=(U_1,\dots,U_n)$ is $(K,v,r)$-admissible and $K'=K\cup
U_1\dots \cup U_n$ then for every terminal vertex $u$ of
$K'$ we have $d(v,u)=r$. Thus $K'$ is a round graph of grade $r$ with
center $v$. Corollary~\ref{cor:m-disj} directly implies:

\begin{cor}\label{cor:r-disj}
Let $K\subseteq X$ be a nondegenerate subtree with terminal edges
$e_1,\dots, e_n$. Let $v$ be a vertex of $K$ and let $r\ge R(K,v)$ be
an integer. Denote by $\mathcal B(K,v,r)$ the set of all
$(K,v,r)$-admissible $n$-tuples. Then 
\[
\mathcal SCyl_\alpha(K)=\underset{(U_1,\dots,U_n)\in \mathcal B(K,v,r)}{\bigsqcup} \mathcal SCyl_\alpha(K\cup
U_1\cup\dots \cup U_n) 
\]
\end{cor}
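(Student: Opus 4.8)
The plan is to derive Corollary~\ref{cor:r-disj} from Corollary~\ref{cor:m-disj} by attaching admissible subtrees to the terminal edges of $K$ one at a time. Write $m_i:=r-d(v,t(e_i))$ for $i=1,\dots,n$; since $r\ge R(K,v)$ each $m_i\ge 0$, and by the definitions an $n$-tuple $(U_1,\dots,U_n)$ is $(K,v,r)$-admissible precisely when $U_i$ is $(K,e_i,m_i)$-admissible for every $i$. I would argue by induction on the integer $k:=\#\{\,i:m_i\ge 1\,\}$, the number of terminal edges of $K$ that have not yet reached distance $r$ from $v$. The base case $k=0$ is immediate: then $K$ is already a round graph of grade $r$ centered at $v$, the unique $(K,v,r)$-admissible tuple is $(\{t(e_1)\},\dots,\{t(e_n)\})$, and $K\cup\{t(e_1)\}\cup\dots\cup\{t(e_n)\}=K$, so there is nothing to prove.

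For the inductive step assume $k\ge 1$ and, after relabelling, that $m_1\ge 1$. Applying Corollary~\ref{cor:m-disj} to $K$ and its terminal edge $e_1$ with $m=m_1$ gives
\[
\mathcal SCyl_\alpha(K)=\bigsqcup_{U_1\in\mathcal B(K,e_1,m_1)}\mathcal SCyl_\alpha(K\cup U_1).
\]
For each fixed $U_1$ I would put $K_1:=K\cup U_1$ and check that $K_1$ is again eligible for the inductive hypothesis with the same $v$ and $r$. Since $m_1\ge 1$, the vertex $t(e_1)$ now has degree $\ge 2$ and is no longer terminal; the terminal vertices of $K_1$ are $t(e_2),\dots,t(e_n)$ together with the terminal vertices of $U_1$ other than $t(e_1)$, and the latter lie at distance $m_1$ from $t(e_1)$, hence --- because in the tree $X$ the geodesic from $v\in K$ to any point of $U_1\setminus\{t(e_1)\}$ passes through $t(e_1)$ --- at distance $d(v,t(e_1))+m_1=r$ from $v$. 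Thus $R(K_1,v)=r$, the new terminal edges of $K_1$ have parameter $0$, and $e_2,\dots,e_n$ retain their parameters $m_2,\dots,m_n$; in particular the number of terminal edges of $K_1$ with parameter $\ge 1$ is exactly $k-1$. The inductive hypothesis applied to $K_1$, after discarding the forced degenerate trees attached at the saturated new terminal edges, then yields
\[
\mathcal SCyl_\alpha(K_1)=\bigsqcup_{(U_2,\dots,U_n)}\mathcal SCyl_\alpha\big(K_1\cup U_2\cup\dots\cup U_n\big),
\]
the union running over all tuples with $U_i\in\mathcal B(K_1,e_i,m_i)$ for $i=2,\dots,n$.

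It remains to splice the two identities together and to identify the resulting index set with $\mathcal B(K,v,r)$. Here I would use the tree geometry of $X$: since $t(e_1)\ne t(e_i)$ for $i\ge 2$ and $U_1$ meets $K$ only at $t(e_1)$, the subtree $U_1\setminus\{t(e_1)\}$ and any $(K,e_i,m_i)$-admissible $U_i$ lie on different sides of $K$ and are therefore disjoint, so that $K_1\cap U_i=K\cap U_i=\{t(e_i)\}$ and hence $\mathcal B(K_1,e_i,m_i)=\mathcal B(K,e_i,m_i)$ for all $i\ge 2$. Consequently the pairs $\big(U_1,(U_2,\dots,U_n)\big)$ arising above range exactly over $\prod_{i=1}^n\mathcal B(K,e_i,m_i)=\mathcal B(K,v,r)$, while $K_1\cup U_2\cup\dots\cup U_n=K\cup U_1\cup\dots\cup U_n$; substituting the second display into the first produces the asserted disjoint-union decomposition of $\mathcal SCyl_\alpha(K)$. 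The step I expect to require the most care is precisely this bookkeeping --- confirming that a single application of Corollary~\ref{cor:m-disj} lowers $k$ by exactly one and that attaching $U_1$ at $t(e_1)$ does not perturb the sets of admissible trees at the remaining terminal edges --- but both points reduce to routine facts about geodesics and sides of a subtree in a simplicial tree.
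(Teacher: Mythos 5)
Your proof is correct and follows exactly the route the paper intends: the paper offers no written argument beyond ``Corollary~\ref{cor:m-disj} directly implies,'' and your induction --- applying Corollary~\ref{cor:m-disj} once per terminal edge and checking that attaching $U_1$ at $t(e_1)$ leaves the admissible sets at the other terminal edges unchanged --- is precisely that implication spelled out. The bookkeeping points you flag (that $R(K\cup U_1,v)=r$ and that $U_1$ is disjoint from any admissible $U_i$ for $i\ge 2$) are handled correctly.
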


For a finite nondegenerate subtree $K\subseteq X$ we put
$r(K)$ to be the mimimum of $R(K,v)$ where $v$ varies over all
vertices of $K$. We refer to $r(K)$ as the \emph{radius} of $K$.

In view of finite additivity of subset currents,
Corollary~\ref{cor:m-disj} and Corollary~\ref{cor:r-disj} immediately
imply:

\begin{cor}\label{cor:b}
Let $K\subseteq X$ be a finite non-degenerate subtree of $X$ and let $\mu\in\gcn$. Then:

\begin{enumerate} 
\item For any terminal edge $e$ of $K$ and any integer $m\ge 1$ we
  have
\[
\langle K, \mu\rangle_\alpha=\sum_{U\in \mathcal B(K,e,m)} \langle K\cup U, \mu\rangle_\alpha.
\]
\item Let $v$ be a vertex of $K$, let $e_1,\dots, e_n$ be the terminal
  edges of $K$ and let $r\ge R(K,v)$ be an integer. Then 
  have
\[
\langle K, \mu\rangle_\alpha=\sum_{(U_1,\dots,U_n)\in \mathcal
  B(K,v,r)} \langle K\cup U_1\dots \cup U_n, \mu\rangle_\alpha.
\]
\end{enumerate}
\end{cor}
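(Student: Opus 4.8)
The plan is to obtain both identities simply by pushing through $\mu$ the geometric disjoint-union decompositions of the subset cylinders already furnished by Corollary~\ref{cor:m-disj} and Corollary~\ref{cor:r-disj}, using only that a subset current is a positive, locally finite, hence in particular finitely additive, measure on $\mathfrak C_N$. The only preliminary points to record are bookkeeping: that the index sets $\mathcal B(K,e,m)$ and $\mathcal B(K,v,r)$ are \emph{finite} (so that ordinary finite additivity of $\mu$ suffices and no convergence issue arises), and that every tree occurring on the right-hand side is again a finite non-degenerate subtree of $X$, so that its weight is defined. Both follow at once from the set-up: $X=\widetilde\Gamma$ is locally finite since $\Gamma$ is a finite graph, and the admissibility conditions $K\cap U=\{t(e)\}$, resp.\ the pairwise disjointness (away from $K$) of the $U_i$, are exactly what makes $K\cup U$, resp.\ $K\cup U_1\cup\dots\cup U_n$, a tree.

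For part (1), fix a terminal edge $e$ of $K$ and an integer $m\ge 1$. Every $(K,e,m)$-admissible tree $U$ is contained in the ball of radius $m$ about $t(e)$, of which (by local finiteness of $X$) there are only finitely many subtrees, so $\mathcal B(K,e,m)$ is finite; and since $U$ has a terminal vertex at distance $m\ge 1$ from $t(e)=K\cap U$, the union $K\cup U$ is a finite non-degenerate subtree of $X$. Corollary~\ref{cor:m-disj} gives the disjoint decomposition $\mathcal SCyl_\alpha(K)=\bigsqcup_{U\in\mathcal B(K,e,m)}\mathcal SCyl_\alpha(K\cup U)$ into compact-open, hence Borel, subsets of $\mathfrak C_N$, each of finite $\mu$-measure by local finiteness of $\mu$. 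Applying $\mu$ and using finite additivity yields $\mu(\mathcal SCyl_\alpha(K))=\sum_{U\in\mathcal B(K,e,m)}\mu(\mathcal SCyl_\alpha(K\cup U))$, which by the definition $\langle K,\mu\rangle_\alpha=\mu(\mathcal SCyl_\alpha(K))$ is precisely the asserted identity.

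For part (2), the argument is the same with Corollary~\ref{cor:r-disj} in place of Corollary~\ref{cor:m-disj}. Given a vertex $v$ of $K$, terminal edges $e_1,\dots,e_n$, and an integer $r\ge R(K,v)$, each $(K,v,r)$-admissible $n$-tuple $(U_1,\dots,U_n)$ has $U_i$ being $(K,e_i,m_i)$-admissible with $m_i=r-d(v,t(e_i))\ge 0$ (with $U_i=\{t(e_i)\}$ when $m_i=0$); finiteness of $X$ makes each of the finitely many choices range over a finite set, so $\mathcal B(K,v,r)$ is finite, and $K\cup U_1\cup\dots\cup U_n$ is a finite non-degenerate subtree. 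Pushing the disjoint decomposition $\mathcal SCyl_\alpha(K)=\bigsqcup_{(U_1,\dots,U_n)\in\mathcal B(K,v,r)}\mathcal SCyl_\alpha(K\cup U_1\cup\dots\cup U_n)$ of Corollary~\ref{cor:r-disj} through $\mu$ and invoking finite additivity gives the second identity.

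I do not expect a genuine obstacle: the statement is a formal consequence of the two cylinder decompositions together with finite additivity of $\mu$. The single point deserving a word of care is the finiteness of the index sets, which is what reduces additivity to its finite form; it is immediate here because $\Gamma$ is a finite graph and hence $X=\widetilde\Gamma$ is locally finite. Everything else — that the relevant unions remain finite non-degenerate subtrees, and that the cylinders are Borel of finite measure — is already built into the definitions of $(K,e,m)$- and $(K,v,r)$-admissibility and of a subset current.
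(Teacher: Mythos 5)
Your proof is correct and follows exactly the route the paper intends: the paper derives Corollary~\ref{cor:b} immediately from the disjoint cylinder decompositions of Corollaries~\ref{cor:m-disj} and~\ref{cor:r-disj} together with finite additivity of $\mu$, which is precisely your argument. The extra bookkeeping you supply (finiteness of the index sets via local finiteness of $X$) is a correct and harmless elaboration of the same idea.
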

Recall that, as noted earlier, for part (2) above, if $(U_1,\dots,U_n)\in \mathcal
  B(K,v,r)$ and $K'=K\cup U_1\dots \cup U_n$ then for any
  terminal vertex $u$ of $K'$ we have $d(v,u)=r$, so that
  $K'$ is a round graph of grade $r$ in $X$.
Thus part (2) of Corollary~\ref{cor:b} implies that, for $\mu\in \gcn$
and an integer $r\ge 1$, knowing the $\mu$-weights of all round graphs of
grade $r$ uniquely determines the $\mu$-weights of all the subtrees of radius
$\le r$.

\begin{defn}[Semi-round graph]
Let $p$ be the mid-point of an edge $e$ of $X$ and let $r\ge 2$ be an
integer. We say that a finite subtree $J$ of $X$ is a \emph{semi-round
  graph} of \emph{grade $r$} with \emph{center} $p$ if $e\in J$ and if for every terminal vertex $u$ of $J$ we have $d(p,u)=r-\frac{1}{2}$. Thus for every terminal vertex $u$ of $J$ belonging to the connected component of $J-\{p\}$ containing $o(e)$ we have $d(o(e),v)=r-1$. Similarly, for every terminal vertex $u$ of $J$ belonging to the connected component of $J-\{p\}$ containing $t(e)$ we have $d(t(e),v)=r-1$. 
\end{defn}

\begin{defn}[Child of a round graph]\label{defn:child} 
Let $r\ge 2$ and let $K\subseteq X$ be a round graph of grade $r$ in $X$
centered at a vertex $v$ of $X$. Let $e$ be an edge of $K$ with
$o(e)=v$. Let $p$ be the mid-point of $e$. We define a semi-round
graph of grade $r$, centered at $p$, called the \emph{$e$-child of
  $K$} and denoted $K_e$, as follows:

The graph $K_e$ consists of all points $q\in K$ with $d(p,q)\le r-\frac{1}{2}$. In other words, $K_e$ is obtained from $K$ by removing all those
terminal vertices $u$ of $K$ and the terminal edges of $K$ adjacent to
these vertices such that the geodesic $[v,u]$ does not pass through
the edge $e$.  The definition of $K_e$ is illustrated in Figure~\ref{Fi:child}.
\end{defn}

\begin{figure}

\psfrag{KE}{Child $K_e$}

\psfrag{K}{Round graph $K$}

\includegraphics[scale=.6]{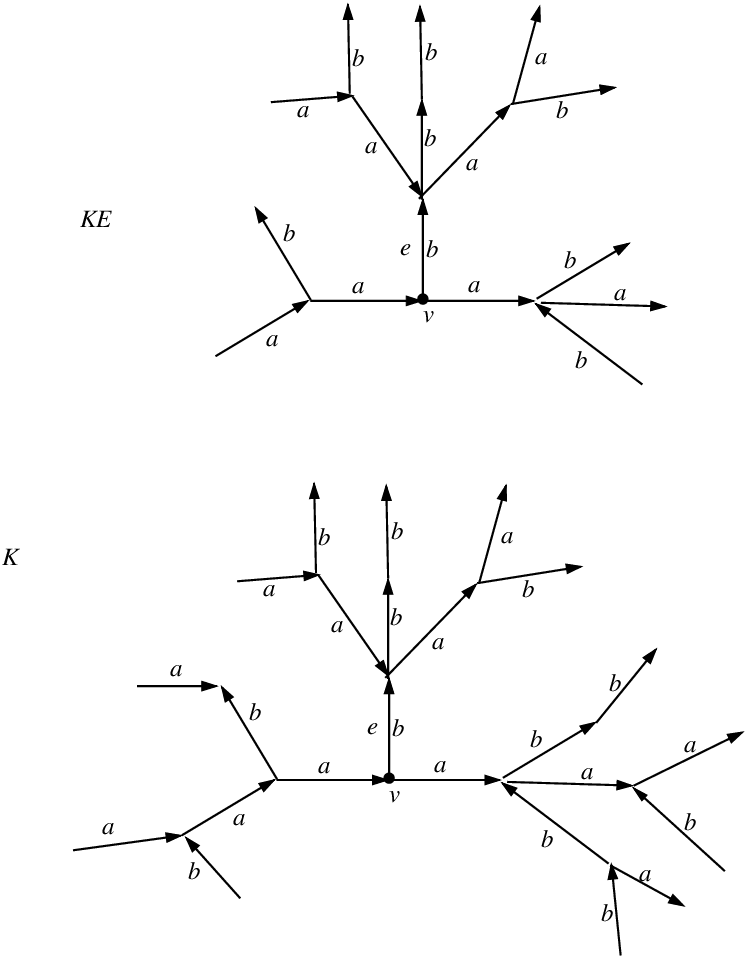}

%  \epsfbox{child.eps}

\caption{Child $K_e$ of a round graph $K$ of grade $3$ with center
  $v$. Here $N=2$, $F_2=F(a,b)$ and $\Gamma$ is the standard rose
  corresponding to the free basis $\{a,b\}$ of $F(a,b)$.}\label{Fi:child}
\end{figure}

If $H$ is a semi-round graph of grade $r$ with center at the midpoint
$p$ of an edge $e$, then $H$ can be enlarged to  round graphs of grade
$r$ in two different ''directions'', namely to round graphs centered at
$o(e)$ and at $t(e)$. This yields the following:

\begin{prop}\label{prop:sum}
Let $J\subseteq X$ be a semi-round graph of grade $r\ge 2$ centered at
the midpoint $p$ of an edge $e$. Let $J_0$ and $J_1$ be the connected
components of $J-\{p\}$ containing $o(e)$ and $t(e)$ accordingly. Let
$e_1,\dots, e_n$ be all the terminal edges of $J$ contained in $J_0$
and let $f_1,\dots, f_k$ be all the terminal edges of $J$ contained in
$J_1$. Let $\mathcal B_0$ be the set of all $n$-tuples of the form
$(U_1,\dots, U_n)$ where each $U_i\in P_+(q(e_i))$, and let $\mathcal B_1$ be the set of all $k$-tuples of the form
$(V_1,\dots, V_k)$ where each $V_j\in P_+(q(f_j))$.

Then for any $\mu\in\gcn$ we have
\begin{gather*}
\langle J, \mu\rangle_\alpha=\sum_{(U_1,\dots, U_n)\in \mathcal B_0} \langle J\cup U_1\dots \cup U_n,
\mu\rangle_\alpha=\sum_{(V_1,\dots, V_k)\in \mathcal B_1} \langle J\cup V_1\dots \cup V_k,
\mu\rangle_\alpha.
\end{gather*}

(Note that in the above summation each $J\cup U_1\dots \cup U_n$ is a
round graph of grade $r$ centered at $o(e)$ and each $J\cup V_1\dots
\cup V_k$ is a round graph of grade $r$ centered at $t(e)$.)

\end{prop}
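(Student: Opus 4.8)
The plan is to deduce Proposition~\ref{prop:sum} directly from the round-graph splitting formula Corollary~\ref{cor:b}(2), applied separately at the two endpoints $o(e)$ and $t(e)$ of the central edge $e$ whose midpoint $p$ is the center of $J$. The only piece of genuine content is an elementary distance computation: viewed from $o(e)$, the semi-round graph $J$ already reaches distance $r$ along every terminal edge lying in $J_1$, but falls exactly one edge short of distance $r$ along every terminal edge lying in $J_0$; viewed from $t(e)$ the situation is reversed.

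First I would record the metric facts. Since $r\ge 2$, neither $o(e)$ nor $t(e)$ is a terminal vertex of $J$ (otherwise $d(p,o(e))=\frac12$ or $d(p,t(e))=\frac12$ would have to equal $r-\frac12$, forcing $r=1$); in particular $J$ contains the edge $e$ and is non-degenerate, so the Corollaries established earlier in this section apply, and both $J_0$ and $J_1$ carry at least one terminal edge of $J$. If $u$ is a terminal vertex of $J$ lying in $J_0$, then the geodesic $[o(e),u]$ stays in $J_0$ and avoids $p$, so $d(o(e),u)=d(p,u)-d(p,o(e))=(r-\frac12)-\frac12=r-1$; if $u$ lies in $J_1$, then $[o(e),u]$ passes through $p$ and $t(e)$, so $d(o(e),u)=d(o(e),p)+d(p,u)=\frac12+(r-\frac12)=r$. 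Hence $R(J,o(e))=r$, and by the same computation with $o(e)$ and $t(e)$ interchanged, $R(J,t(e))=r$.

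Next I would apply Corollary~\ref{cor:b}(2) with $K=J$, the distinguished vertex $v=o(e)$, and the integer $r$ (legitimate since $r=R(J,o(e))$). A $(J,o(e),r)$-admissible tuple attaches to each terminal edge $e_i$ (inside $J_0$) a tree from $\mathcal B(J,e_i,m_i)$ with $m_i=r-d(o(e),t(e_i))=r-(r-1)=1$, that is, an arbitrary $U_i\in\mathcal B(J,e_i,1)=P_+(q(e_i))$, and attaches to each terminal edge $f_j$ (inside $J_1$) a tree from $\mathcal B(J,f_j,m_j)$ with $m_j=r-d(o(e),t(f_j))=r-r=0$, that is, the degenerate tree $\{t(f_j)\}$, which changes nothing. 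Hence $\mathcal B(J,o(e),r)$ is canonically identified with the set $\mathcal B_0$ of the statement, and for $(U_1,\dots,U_n)\in\mathcal B_0$ the associated tree is exactly $J\cup U_1\cup\dots\cup U_n$, which is a round graph of grade $r$ centered at $o(e)$ (as noted just before Corollary~\ref{cor:r-disj}). Corollary~\ref{cor:b}(2) thus gives $\langle J,\mu\rangle_\alpha=\sum_{(U_1,\dots,U_n)\in\mathcal B_0}\langle J\cup U_1\cup\dots\cup U_n,\mu\rangle_\alpha$. Running the verbatim symmetric argument with $v=t(e)$ --- so that the roles of $J_0,J_1$ and of $\mathcal B_0,\mathcal B_1$ are exchanged --- gives $\langle J,\mu\rangle_\alpha=\sum_{(V_1,\dots,V_k)\in\mathcal B_1}\langle J\cup V_1\cup\dots\cup V_k,\mu\rangle_\alpha$, and concatenating the two identities proves the Proposition. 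I do not anticipate any real obstacle: the argument is a direct combination of the disjoint-cylinder decompositions already established above, and the only step requiring care is the distance bookkeeping pinning the layer-thickness parameters to $1$ on one side of $p$ and $0$ on the other --- which is precisely what makes each one-sided expansion of $J$ a round graph of grade $r$ and identifies the two indexing sets with $\mathcal B_0$ and $\mathcal B_1$.
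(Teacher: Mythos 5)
Your proof is correct and takes essentially the same route as the paper, whose entire proof is the single sentence that the statement is a direct corollary of the Kirchhoff formula (Proposition~\ref{prop:kirch}). Your application of Corollary~\ref{cor:b}(2) at $v=o(e)$ and $v=t(e)$, with the distance bookkeeping showing the layer parameters are $1$ on one side of $p$ and $0$ on the other, is simply a careful spelling-out of that one-line argument.
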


\begin{proof}
This statement is a direct corollary of Proposition~\ref{prop:kirch}.
\end{proof}
 
\section{Finite-dimensional polyhedral approximations of $\gcn$ and the integral weight realization theorem}

Let $r\ge 2$ be an integer. Denote by $\mathcal B_{\Gamma, r}$ the set
of all finite subtrees $K\subseteq X$ such that $K$ is a round graph
of grade $r$ in $X$. Also, denote by $\mathbf B_{\Gamma, r}$ the set
of all $F_N$-translation classes $[K]$ of trees $K\in \mathcal
B_{\Gamma, r}$.

Denote by $\mathbf J_{\Gamma, r}$ the set of all $F_N$-translation
classes $[J]$ of semi-round graphs $J\subseteq X$ of grade $r$.

\begin{defn}[Approximating polyhedra]\label{defn:Q}

Denote by $\mathcal Q_{\Gamma, r}$ the set of all functions
$\vartheta: \mathcal B_{\Gamma, r}\to \mathbb R_{\ge 0}$ satisfyng the
following properties:

\begin{enumerate}
\item For every $K\in \mathcal B_{\Gamma, r}$ and every $g\in F_N$ we
  have $\vartheta(K)=\vartheta(gK)$.
\item For every semi-round graph $J\subseteq X$ of grade $r$, in the
  notations of Proposition~\ref{prop:sum} we have
\[
\sum_{(U_1,\dots, U_n)\in \mathcal B_0} \vartheta(J\cup U_1\dots \cup U_n)
=\sum_{(V_1,\dots, V_k)\in \mathcal B_1} \vartheta(J\cup V_1\dots \cup V_k).
\]
\end{enumerate}

We call $\mathcal Q_{\Gamma, r}$ an \emph{approximating polyhedron}. 

\end{defn}

Note that since $X$ is locally finite, there are only finitely many
$F_N$-translation classes $[K]$ of trees $K\in \mathcal B_{\Gamma,
  r}$. Thus a point $\theta\in \mathcal Q_{\Gamma, r}$ can be viewed
as a function from a finite set $\mathbf B_{\Gamma, r}$ to $\mathbb
R_{\ge 0}$. Namely, if $m$ is the cardinality of $\mathbf B_{\Gamma,
  r}$, we can view $\mathcal Q_{\Gamma, r}$ as a subset of $\mathbb
R_{\ge 0}^m$, given by finitely many linear equations with integer
coefficients coming from condition (2) in Definition~\ref{defn:Q}.

The following lemma is a straightforward inductive corollary of
Definition~\ref{defn:occur}:

\begin{lem}\label{lem:v}
Let $\Delta$ be a finite cyclically reduced $\Gamma$-graph.
Then for every integer $r\ge 1$
\[
\# V(\Delta)=\sum_{[K]\in \mathbf B_{\Gamma,
  r}} \langle K, \Delta\rangle_\alpha.
\]
\end{lem}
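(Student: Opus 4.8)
The plan is to prove the identity by exhibiting an explicit bijection. Fix once and for all representatives $K_1,\dots,K_m$ of the (finitely many, since $X$ is locally finite) translation classes in $\mathbf B_{\Gamma,r}$; since the number of occurrences of $K$ in $\Delta$ depends only on $[K]$, the right-hand side of the claimed formula equals $\sum_{i=1}^m \#\{\mathfrak O\colon K_i\to\Delta \text{ an occurrence}\}$, i.e.\ the number of pairs $(K_i,\mathfrak O)$. The bijection onto $V(\Delta)$ will be $\Phi(K_i,\mathfrak O)=\mathfrak O(c_i)$, where $c_i$ is the (unique, as $r\ge 1$) center of the round graph $K_i$. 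To analyze $\Phi$ I first record two facts. Since $\Delta$ is folded, every reduced edge-path in $\Delta$ issuing from a vertex $y$ lifts uniquely to a reduced edge-path in $X=\widetilde\Gamma$ starting at any prescribed lift $v$ of $\tau(y)\in V\Gamma$; hence the union $K_y\subseteq X$ of the lifts, based at $v$, of all reduced edge-paths of length $\le r$ in $\Delta$ issuing from $y$ is a well-defined finite subtree of $X$, and projecting paths back down defines a $\Gamma$-morphism $\mathfrak O_y\colon K_y\to\Delta$ with $\mathfrak O_y(v)=y$. Since $\Delta$ is cyclically reduced, every vertex of $\Delta$ has degree $\ge 2$, so every reduced edge-path of length $<r$ in $\Delta$ extends to one of length exactly $r$; therefore every terminal vertex of $K_y$ lies at distance exactly $r$ from $v$, so $K_y$ is a round graph of grade $r$ centered at $v$.

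Next I would check that $\mathfrak O_y$ is an occurrence and deduce surjectivity of $\Phi$. Let $x$ be a vertex of $K_y$ with $d(v,x)<r$ — equivalently a non-terminal vertex of $K_y$. For $x=v$, the edges of $K_y$ at $v$ are, by foldedness, exactly the lifts of all edges of $\Delta$ at $y$, so $Lk_{K_y}(v)=Lk_\Delta(y)$. For $x\ne v$, the edges of $K_y$ at $x$ are the reversal of the last edge of the lifted path reaching $x$ together with the first edges of all reduced length-one extensions of that path in $\Delta$, and together these exhaust $Lk_\Delta(\mathfrak O_y(x))$. So $Lk_{K_y}(x)=Lk_\Delta(\mathfrak O_y(x))$ at every non-terminal $x$, i.e.\ $\mathfrak O_y$ is an occurrence. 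For surjectivity of $\Phi$, given $y\in V(\Delta)$ pick any lift $v$ of $\tau(y)$ and form $K_y$ as above; it is a round graph of grade $r$, so $K_y=gK_i$ for a unique $i$ and some $g\in F_N$ (with $g$ carrying $c_i$ to $v$), and then $\mathfrak O_y\circ g\colon K_i\to\Delta$ is an occurrence with $(\mathfrak O_y\circ g)(c_i)=\mathfrak O_y(v)=y$.

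For injectivity I would show that an occurrence of a grade-$r$ round graph is entirely determined by the image of its center. Suppose $\mathfrak O\colon K\to\Delta$ is an occurrence with $K$ a round graph of grade $r$, center $c$, and put $y=\mathfrak O(c)$; note $\tau(c)=\tau(y)$. Every vertex of $K$ at distance $<r$ from $c$ is non-terminal, hence of degree $\ge 2$, so the occurrence condition forces $Lk_K(x)=Lk_\Delta(\mathfrak O(x))$ there; starting at $c$ and inducting on $d(c,x)$ — using that an edge of $X$ is determined by its origin and label, and that $\Delta$ is folded so $\mathfrak O$ is forced on each successive edge — shows that $(K,\mathfrak O)$ is exactly the pair built from the lift $c$ of $\tau(y)$ as in the first paragraph. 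Hence if $\Phi(K_i,\mathfrak O)=\Phi(K_j,\mathfrak O')=y$, then $c_i$ and $c_j$ are lifts of $\tau(y)$, so $c_i=gc_j$ for the unique $g\in F_N$ carrying one to the other; the uniqueness just established gives $K_i=gK_j$, so $[K_i]=[K_j]$, so $i=j$, so $g$ fixes $c_i$, so $g=1$ (the deck action of $F_N$ on $X$ is free) and $\mathfrak O=\mathfrak O'$. Thus $\Phi$ is a bijection, giving $\#V(\Delta)=\sum_{i=1}^m\#\{\text{occurrences of }K_i\text{ in }\Delta\}=\sum_{[K]\in\mathbf B_{\Gamma,r}}\langle K,\Delta\rangle_\alpha$. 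The main point to handle carefully is not the induction, which is routine, but keeping the $F_N$-translation bookkeeping honest and verifying that ``folded'' together with ``every vertex of degree $\ge 2$'' is precisely what forces $K_y$ to be a \emph{round} graph of grade $r$ and $\mathfrak O_y$ a genuine occurrence rather than merely an immersion.
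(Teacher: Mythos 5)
Your argument is correct and is exactly the one the paper has in mind: the paper dismisses the lemma as ``a straightforward inductive corollary'' of the definition of occurrence, and the key point you prove in detail --- that each vertex $u$ of $\Delta$ is the center-image of exactly one occurrence of a grade-$r$ round graph, obtained by lifting the $r$-ball at $u$ to $X$ --- is stated verbatim (without proof) inside the paper's proof of Theorem~\ref{thm:realization}. Your write-up just supplies the unique-path-lifting, extension-by-degree-$\ge 2$, and $F_N$-translation bookkeeping that the paper leaves to the reader.
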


Recall, that, by definition, any $\Gamma$-graph $\Upsilon$ comes equipped with a
``labelling'' graph-map $\tau: \Upsilon\to\Gamma$.

The following statement is Theorem~\ref{thm:A} from the Introduction:

\begin{thm}\label{thm:realization}
Let $r\ge 2$ and let $\vartheta\in  \mathcal Q_{\Gamma, r}$ be such
that for some $K_0\in \mathcal B_{\Gamma, r}$ we have
$\vartheta(K_0)>0$. Suppose also that for every $K\in \mathcal
B_{\Gamma, r}$ we have $\vartheta(K)\in \mathbb Z$. Then there exists
a cyclically reduced (and possibly disconnected) finite $\Gamma$-graph
$\Delta$ such that for every $K\in \mathcal
B_{\Gamma, r}$ we have $\vartheta(K)=\langle K, \Delta\rangle_\alpha$.
\end{thm}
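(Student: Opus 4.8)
The plan is to build $\Delta$ as a finite $\Gamma$-graph whose vertex set is partitioned into ``slots'' indexed by round graphs of grade $r$, and then to define the edges of $\Delta$ by a matching construction that glues compatible round graphs along their common semi-round graphs. More precisely, for each $F_N$-translation class $[K]\in \mathbf B_{\Gamma,r}$ choose a representative $K$; introduce $\vartheta(K)$ distinct copies of the center vertex $v$ of $K$, each decorated with the full combinatorial data of $K$ (equivalently, with the isomorphism type of the ball of radius $r$ around $v$ in the eventual graph $\Delta$). These decorated vertices are the vertices of $\Delta$, so by construction $\# V(\Delta)=\sum_{[K]}\vartheta(K)$, matching Lemma~\ref{lem:v}. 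The type map $\tau$ on vertices is forced by $K$. The task is then to introduce oriented edges realizing all the prescribed local pictures simultaneously and consistently.

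\textbf{The matching step.} Fix an edge type $e$ of $X$ with $o(e)=v$. A vertex decorated by a round graph $K$ (centered at $v$) that contains $e$ determines an $e$-child $K_e$, a semi-round graph of grade $r$ centered at the midpoint $p$ of $e$; this records ``what $\Delta$ looks like on the $o(e)$-side of $p$.'' Symmetrically, a vertex decorated by a round graph $K'$ centered at $t(e)$ whose edge $e^{-1}$ lies in $K'$ determines a semi-round graph that records the $t(e)$-side picture. To put an edge labelled $e$ in $\Delta$ from the first vertex to the second, these two semi-round graphs must be the two ``halves'' of a single semi-round graph $J$ of grade $r$ centered at $p$. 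So for each translation class $[J]\in \mathbf J_{\Gamma,r}$ of semi-round graphs we must produce a bijection between (a) the set of pairs (vertex decorated by some round graph $K\supseteq e$ centered at $o(e)$ with $K_e$ the $o(e)$-side of $J$, together with a choice of how the remaining terminal edges of $K$ extend past $J$'s $o(e)$-side terminal edges) and (b) the analogous set on the $t(e)$-side. The key point is that condition (2) of Definition~\ref{defn:Q} (i.e. Proposition~\ref{prop:sum} applied to $\vartheta$) says exactly that the two sides have equal cardinality: summing $\vartheta(K)$ over round graphs $K$ obtained by enlarging $J$ on the $o(e)$-side equals summing over enlargements on the $t(e)$-side. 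An arbitrary bijection will not do, however --- one must route edges so that the ball of radius $r$ around each vertex in the resulting graph $\Delta$ is genuinely isomorphic to the round graph $K$ that decorates it. This requires the matching to be performed coherently across all edge types incident to a given vertex, which is the technical heart of the argument.

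\textbf{Organizing the coherence.} I expect the clean way to handle this is a layered/inductive construction: build the correspondences grade by grade, or equivalently first fix, for each vertex and each incident edge direction, which semi-round graph $J$ it is glued along, using the counting identity to see these choices can be made consistently; then within each $J$-class perform an arbitrary bijection between the two sides. Because a round graph of grade $r$ is reconstructed from its collection of $e$-children (one for each edge $e$ at its center), specifying, for every vertex $x$ decorated by $K$ and every edge $e$ at the center of $K$, a neighboring vertex whose back-child matches, will automatically force $Lk_\Delta(x)=Lk_K(v)$ and, by induction on distance from $x$, force the ball of radius $r$ about $x$ to be isomorphic to $K$. One then checks: (i) every vertex of $\Delta$ has degree $\geq 2$ (round graphs of grade $r\geq 2$ have centers of degree $\geq 2$, forced by the switch conditions together with $\vartheta\geq 0$ and $\vartheta(K_0)>0$); (ii) $\Delta$ is folded, which is immediate since each vertex $x$ has exactly one outgoing edge of each label in $Lk_K(v)\subseteq E\Gamma$ and $\Gamma$-graph structure is respected; hence $\Delta$ is cyclically reduced. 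Finally, $\langle K,\Delta\rangle_\Gamma=\vartheta(K)$ for $K\in\mathcal B_{\Gamma,r}$: an occurrence of $K$ in $\Delta$ is exactly the choice of a vertex of $\Delta$ whose radius-$r$ ball is isomorphic to $K$ in the way $K$ prescribes, and by construction there are precisely $\vartheta(K)$ such vertices. (For general $K\in\mathcal K_\Gamma$ the equality with $\langle K,\mu_\Delta\rangle$ follows from Proposition-Definition~\ref{prop:wd} and Corollary~\ref{cor:b}, but that is outside what the theorem asks.)

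\textbf{Main obstacle.} The crux is step (ii)/(iii): arranging the edge-matchings so that they are mutually consistent at every vertex --- i.e. that the local picture one is forced into from one incident edge does not conflict with the picture forced from another incident edge, and that iterating outward does not produce a contradiction before distance $r$. The switch conditions of Definition~\ref{defn:Q}(2) guarantee the \emph{numerical} feasibility of each individual edge-class matching, but assembling them into a single globally consistent graph $\Delta$ (rather than just an abstract bipartite perfect matching for each $[J]$) is where the real work lies; I anticipate this is handled by carefully choosing, for each vertex, a consistent system of ``half-edge'' data before committing to the matchings, exploiting that a round graph is the direct limit of its nested sub-round-graphs of smaller grade.
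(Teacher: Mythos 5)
Your construction is the same as the paper's: take $\vartheta(K)$ copies of the center $v$ of each round graph $K$ together with its half-link, decorate each free half-edge end by the pair $(\tau(e),[K_e])$ where $K_e$ is the $e$-child, observe that condition (2) of Definition~\ref{defn:Q} makes the two sides of each decoration class equinumerous, and glue along a bijection. The gap is in what you do next: you assert that ``an arbitrary bijection will not do'' and that the real work is to make the matchings for different edge types at a given vertex coherent, which you then leave as an unproved ``layered/inductive construction.'' This is exactly backwards. An arbitrary decoration-respecting matching does work, and no cross-edge coherence condition is needed, because the decoration already records the \emph{entire} semi-round graph $K_e$, which contains both $B_K(o(e),r-1)$ and $B_K(t(e),r-1)$. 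Hence when a sub-vertex of a copy of $K$ (centered at $v=o(e)$) is identified with a sub-vertex of a copy of $K'$ (centered at $v'=o(e')$) carrying the complementary decoration, the isomorphism $K_e\cong K'_{e'}$ forces $B_K(t(e),r-1)\cong B_{K'}(v',r-1)$. Since the link in $\Delta$ of each vertex $v_{[K'],j}$ equals $Lk_{K'}(v')$ by construction of the half-links, an induction on $d=d(v,x)$ shows that for every vertex $x$ of $K$ with $d\le r$, the vertex of $\Delta$ reached by following the corresponding edge path from $v_{[K],i}$ is decorated by a round graph whose radius-$(r-d)$ ball about its center is isomorphic to $B_K(x,r-d)$; for $d\le r-1$ this yields $Lk_\Delta(\mathfrak O(x))=Lk_K(x)$, i.e., an occurrence of $K$ centered at $v_{[K],i}$. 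There is no interaction between the matchings at different half-edges of the same vertex: distinct edges of $K$ at $v$ carry distinct labels (as $X\to\Gamma$ is a covering), so they lie in distinct decoration classes, and the children $K_{e_1},K_{e_2}$ are automatically compatible, being subtrees of the single tree $K$. So the ``main obstacle'' you flag is not an obstacle, and your proposed fix is both unnecessary and not carried out.

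Two smaller corrections. First, requiring the ball of radius $r$ around each vertex \emph{in $\Delta$ itself} to be isomorphic to $K$ is too strong and is not what is needed: occurrences are immersions, not embeddings, so the relevant object is the ball of radius $r$ in $\widetilde\Delta$ around a lift of the vertex; this is also how one sees that each vertex of $\Delta$ is the center of exactly one occurrence of a grade-$r$ round graph, whence $\langle K,\Delta\rangle_\Gamma=\vartheta(K)$ on the nose. Second, the center of a round graph of grade $r\ge 1$ has degree $\ge 2$ purely by definition (a degree-one center would be a terminal vertex at distance $0\ne r$ from itself), so cyclic reducedness of $\Delta$ requires no appeal to the switch conditions or to $\vartheta(K_0)>0$; the latter is only needed to ensure $\Delta\ne\emptyset$.
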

\begin{proof}

For each $[K]\in \mathbf
B_{\Gamma, r}$ we choose a representative $K\in [K]$, so that  $K\in
\mathcal B_{\Gamma, r}$ and let $v=v_{[K]}$
be the center vertex of $K$. Thus $K$ is a round graph of rank $r$
centered at $v$. Denote $n_{[K]}:=\vartheta(K)$. By
assumption every $n_{[K]}\ge 0$ is an integer and there exists
$K_0\in \mathcal B_{\Gamma, r}$ such that $n_{[K_0]}\ge 0$.

For every $[K]\in \mathbf
B_{\Gamma, r}$ we make  $n_{[K]}$ copies $v_{[K],i}$ (where
$i=1,\dots, n_{[K]}$) of the vertex $v_{[K]}$ together with
``half-links'' of $v_{[K]}$ in $K$. That is for each $v_{[K],i}$  and
for each edge $e$ of $K$ with $o(e)=v_{[K]}$ we attach a closed
half-edge $[v_{[K],i}, p_{e,i}]$ at $v_{[K],i}$ representing a copy of
the initial half of the edge $e$.

We refer to the points  $p_{e,i}$ as \emph{sub-vertices} and to the
segments $[v_{[K],i}, p_{e,i}]$ as \emph{sub-edges}.
We endow each sub-vertex $p_{e,i}$ with a \emph{decoration}, which is
an ordered pair $(\tau(e), [K_e])$, where $K_e$ is the $e$-child of $K$ at
$v$. 

Let $\Omega_\vartheta$ be the collection of all the decorated ``half-links" obtained in this way. Thus $\Omega_\vartheta$ consists of $M:=\sum_{[K]\in \mathbf B_{\Gamma, r}} \vartheta(K)=\sum_{[K]\in \mathbf B_{\Gamma, r}}  n_{[K]}$ ``half-links".

Condition~(2) in Definition~\ref{defn:Q} implies that for every
semi-round graph $J\subseteq X$ of grade $r$ with center $p$ being a
mid-point of an oriented edge $e_J$ of $X$, the number of sub-vertices with
decoration  $(\tau(e_J), [J])$ is equal to the number of sub-vertices
with decoration  $(\tau(e_J^{-1}), [J])$.

For each $[J]\in \mathbf J_{\Gamma, r}$ as above we choose a matching
(i.e. a bijection) between the set of subvertices  in $\Omega_\vartheta$ with
decoration  $(\tau(e_J), [J])$ and the set of sub-vertices
with decoration  $(\tau(e_J^{-1}), [J])$

We then identify each sub-vertex with decoration  $(\tau(e_J), [J])$
with the corresponding to it under this matching subvertex with
decoration  $(\tau(e_J^{-1}), [J])$. We perform these identifications
simultaneously for all $[J]\in \mathbf J_{\Gamma, r}$. This gluing procedure is illustrated in Figure~\ref{Fi:glue}.
\begin{figure}

\psfrag{HV}{``Half-link'' of $v$ in $K$}
\psfrag{HV'}{``Half-link'' of $v'$ in $K'$}
\psfrag{LD}{Local picture in $\Delta$}
\psfrag{aJ}{$(a,[J])$}
\psfrag{a1J}{$(a^{-1},[J])$}
\psfrag{K}{$K$}
\psfrag{K'}{$K'$}

\includegraphics[scale=.8]{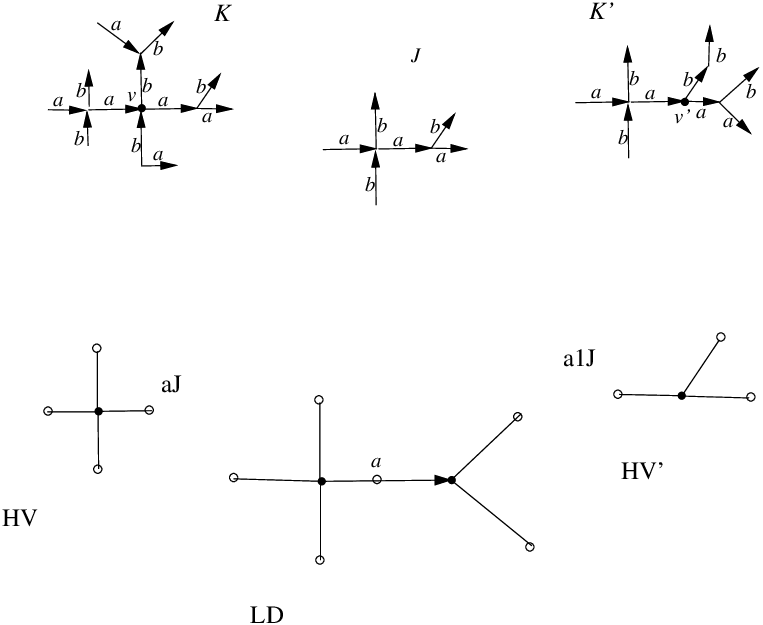}

%  \epsfbox{glue-a.eps}
\caption{Illustration of the ``gluing'' procedure for constructing
  $\Delta$ in the proof of Theorem~\ref{thm:realization}.
 Here $N=2$, $F_2=F(a,b)$ and $\Gamma$ is the standard rose
  corresponding to the free basis $\{a,b\}$ of $F(a,b)$. Filled-in
  circles represent vertices and non-filled circles represent ``sub-vertices''.}\label{Fi:glue}
\end{figure}

The resulting object $\Delta$ has a natural structure of a graph,
where every vertex is of the form  $v_{[K],i}$ and every edge is
obtained by gluing two sub-edges along a sub-vertex; thus subvertices
become mid-points of edges in $\Delta$. Moreover, $\Delta$ inherits a natural $\Gamma$-graph structure as
well. Indeed, an oriented edge $f$ in $\Delta$ arises as the result of gluing
a sub-edge $[v_{[K],i}, p_{e,i}]$ and a sub-edge $[v_{[K'],j},
p_{e',j}]$ by indentifying the sub-vertices  $p_{e,i}$ and $p_{e',j}$
where $p_{e,i}$ is decorated by $(\tau(e), [K_e])$ and  $p_{e',j}$ is
decorated by $(\tau(e'), [K_{e'}])$ such that  $[K_e]=[K_{e'}]$ and
such that $\tau(e')=\tau(e)^{-1}$. In $\Delta$ we have
$o(f)=v_{[K],i}$ and $t(f)=v_{[K'],j}$. We put $\tau(f):=\tau(e)\in
E\Gamma$ and $\tau(f^{-1}):=\tau(e')=\tau(e)^{-1}$.  Also, for each vertex $v_{[K],i}$ of $\Delta$ put $\tau(v_{[K],i}):=\tau(v_{[K]})$.

This turns
$\Delta$ into a nonempty $\Gamma$-graph. Moreover, by construction $\Delta$ is
finite, folded and cyclically reduced and the number of vertices in
$\Delta$ is equal to $M=\displaystyle\sum_{[K]\in \mathbf
B_{\Gamma, r}} \vartheta(K)=\sum_{[K]\in \mathbf B_{\Gamma, r}}  n_{[K]}$.

Note that by construction, for every vertex  $v_{[K],i}$ of $\Delta$
we have $Lk_\Delta(v_{[K],i})=Lk_K (v_{[K]})$. (Recall that $v_{[K]}$ is
the center vertex of the round graph $K$).
Moreover, by definition of a child of a round graph and using the fact that $r\ge
2$ we see that if $f=[v_{[K],i},v_{[K'],j}]$ is an edge of $\Delta$ as
in the preceding paragraph, then
$Lk_\Delta(v_{[K'],j})=Lk_K(t(e))$. Iteratively applying this crucial
fact to the spheres of increasing radius around the center vertex in $K$, we see that for each vertex $v_{[K],i}$ of $\Delta$ as above,
sending $v_{[K]}$ to $v_{[K],i}$ extends to a (necessarily unique)
morphism of $\Gamma$-graphs $\mathfrak O:K\to \Delta$ with $\mathfrak
O(v_{[K]})=v_{[K],i}$ such that $\mathfrak O$ is an occurrence of $K$
in $\Delta$ in the sense of Definition~\ref{defn:occur}.

Moreover, given a vertex $u$ of $\Delta$, there exists exactly one
occurrence of a round graph of grade $r$ in $\Delta$ that sends the
center of that round graph to $u$ (this occurrence corresponds to
taking the ball of radius $r$ in $\widetilde \Delta$ centered at a
lift of $u$). Thus, by construction, we see that for every 
 $[K]\in \mathbf
B_{\Gamma, r}$ the number of occurrences of $[K]$ in $\Delta$ is equal
to $n_{[K]}$. Hence for every $K\in \mathcal
B_{\Gamma, r}$ we have $\vartheta(K)=\langle K, \Delta\rangle_\alpha$,
as required.

\end{proof}

\begin{rem}\label{rem:alt}
There is an alternative equivalent description of the graph $\Delta$
constructed in the proof of Theorem~\ref{thm:realization}.  Namely, for every $[K]\in \mathbf
B_{\Gamma, r}$ we make  $n_{[K]}=\vartheta(K)$ copies $[K]_i$ (where
$i=1,\dots, n_{[K]}$) of $K$ and denote the center vertex of $[K]_i$ by 
$v_{[K],i}$. We then look at the set $\Xi$ of all pairs $([K]_i,e)$ where $[K]_i$ is as above
and $e$ is an edge of $K$ with $o(e)=v_{[K]}$, the center vertex of
$K$. We endow each  $([K]_i,e)$ with a ``decoration'' $(\tau(e),
[K_e])$. Thus $[K_e]$ is a semi-round graph of grade $r$, which comes
from the ball of radius $r-\frac{1}{2}$ in $K$ centered at the
midpoint of $e$. Condition~(2) in Definition~\ref{defn:Q} implies that for every
semi-round graph $J\subseteq X$ of grade $r$ with center $p$ being a
mid-point of an oriented edge $e_J$ of $X$, the number elements of 
$\Xi$ with decoration $(\tau(e_J), [J])$ is is equal to the number of
elements of $\Xi$ with decoration  $(\tau(e_J^{-1}), [J])$.

For each $[J]\in \mathbf J_{\Gamma, r}$ as above we choose a matching
between the set of elements of 
$\Xi$ with decoration $(\tau(e_J), [J])$ and the set of
elements of $\Xi$ with decoration  $(\tau(e_J^{-1}), [J])$. Then we perform partial gluings on the disjoint union $\Omega$ of all $[K]_i$ (where
$[K]$ varies over $\mathbf B_{\Gamma, r}$) as follows. Whenever
$([K]_i,e)$ is matched with $([K']_j,e')$, it follows that the
$e$-child $K_e$ of $K$ is (canonically) isomorphic as a $\Gamma$-graph
to the $e'$-child $K'_{e'}$ of $K'$. (Recall that $K_e$ is the ball of
radius $r-\frac{1}{2}$ in $K$ centered at the midpoint of $e$ and that
$K'_{e'}$ is the ball of radius $r-\frac{1}{2}$ in $K'$ centered at
the midpoint of $e'$). We glue the copy of $K_e$ in $[K]_i$ to the
copy $K'_{e'}$ in $[K']_j$ along the $\Gamma$-graph isomorphism
between $K_e$ and $K'_{e'}$.  We perform these gluings simultaneously,
on the disjoint union $\Omega$ of all $[K]_i$, as $[K]$ varies over $\mathbf B_{\Gamma,
  r}$. The result is a cyclically reduced finite $\Gamma$-graph which
is the same as the $\Gamma$-graph $\Delta$ constructed in the proof of
Theorem~\ref{thm:realization}.  This alternative  gluing procedure is illustrated in Figure~\ref{Fi:glue1}.
\begin{figure}

\psfrag{K}{$K$}
\psfrag{K'}{$K'$}
\psfrag{J}{$J$}

\psfrag{LD}{Local picture in $\Delta$}

\includegraphics[scale=.8]{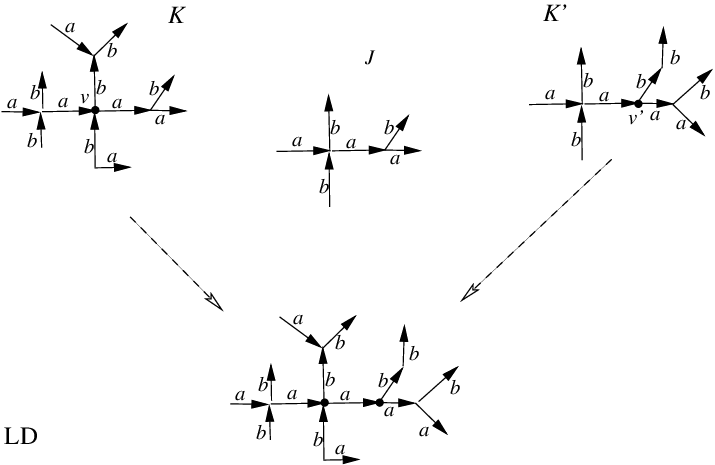}

%  \epsfbox{glue1.eps}
\caption{Illustration of the alternative ``gluing'' procedure for constructing
  $\Delta$ as in Remark~\ref{rem:alt}..
 Here $N=2$, $F_2=F(a,b)$ and $\Gamma$ is the standard rose
  corresponding to the free basis $\{a,b\}$ of $F(a,b)$.}\label{Fi:glue1}
\end{figure}

\end{rem}

\begin{rem}
Suppose that in Theorem~\ref{thm:realization} $\vartheta\in  \mathcal Q_{\Gamma, r}$ has the property that whenever $\vartheta(K)>0$ then $K\subseteq X$ is a geodesic segment of length $2r$ in $X$. Then for each such $K$ the center vertex $v_{[K]}$ (which is the mid-point of this segment) has degree $2$ in $K$ and the proof of   Theorem~\ref{thm:realization} produces a finite cyclically reduced graph $\Delta$ where every vertex has degree $2$, so that $\Delta$ is a disjoint union of finitely many simplicial circles.  One can use this fact to adapt the proof of Theorem~\ref{thm:dense} below to the case of ordinary geodesic currents and to produce a new proof (different from those given in \cite{Martin,Ka2}) that the set of rational currents is dense in $\Curr(F_N)$.
\end{rem}

The following statement (c.f. Theorem~\ref{thm:C} from the Introduction) provides a positive answer to Problem~10.11 in
\cite{KN3}:

\begin{thm}\label{thm:int}
Let $\mu\in \gcn$ be a nonzero subset current such that for every
nondegenerate finite subtree $K\subseteq X$ we have $\langle K,
\mu\rangle\in \mathbb Z$. Then there exists a finite cyclically
reduced (possibly disconnected) $\Gamma$-graph $\Delta$ such that 
$\mu=\mu_\Delta$. 
\end{thm}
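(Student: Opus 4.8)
The plan is to reduce Theorem~\ref{thm:int} to Theorem~\ref{thm:realization} by passing to finite-dimensional approximations of $\mu$ and then checking that these approximations stabilize in a suitable sense. First I would fix an integer $r\ge 2$ and consider the restriction of the weight system of $\mu$ to $\mathcal B_{\Gamma, r}$, i.e.\ the function $\vartheta_r: \mathcal B_{\Gamma, r}\to \mathbb R_{\ge 0}$ given by $\vartheta_r(K)=\langle K,\mu\rangle_\alpha$. By $F_N$-invariance of $\mu$, by the Kirchhoff/semi-round relations of Proposition~\ref{prop:sum} (which hold for every $\mu\in\gcn$), and by the integrality hypothesis, $\vartheta_r$ lies in $\mathcal Q_{\Gamma, r}$ and takes values in $\mathbb Z_{\ge 0}$. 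Since $\mu\ne 0$, part (2) of Corollary~\ref{cor:b} shows that $\sum_{[K]\in\mathbf B_{\Gamma,r}}\vartheta_r(K)=\mu(\mathfrak C_N)>0$ (this sum equals the total mass, as the round graphs of grade $r$ partition $\mathfrak C_N$), so some $\vartheta_r(K_0)>0$. Hence Theorem~\ref{thm:realization} applies and yields a finite cyclically reduced $\Gamma$-graph $\Delta_r$ with $\langle K,\Delta_r\rangle_\alpha=\langle K,\mu\rangle_\alpha$ for all $K\in\mathcal B_{\Gamma, r}$, and therefore, by part (2) of Corollary~\ref{cor:b} applied to both $\mu$ and $\mu_{\Delta_r}$, also $\langle K,\mu_{\Delta_r}\rangle_\alpha=\langle K,\mu\rangle_\alpha$ for \emph{every} subtree $K$ of radius $\le r$.

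The crux is then to show that the sequence $(\mu_{\Delta_r})_{r\ge 2}$ actually stabilizes, that is, $\mu_{\Delta_r}=\mu$ for all sufficiently large $r$, rather than merely converging to $\mu$ weakly. The key control is on the number of vertices: by Lemma~\ref{lem:v}, $\#V(\Delta_r)=\sum_{[K]\in\mathbf B_{\Gamma,r}}\langle K,\mu\rangle_\alpha=\mu(\mathfrak C_N)$, a fixed finite integer independent of $r$. So all the $\Delta_r$ have the same (bounded) number of vertices, hence the same bounded number of edges (bounded in terms of $\#V(\Delta_r)$ and the maximal degree in $\Gamma$), and there are only finitely many isomorphism types of finite $\Gamma$-graphs with at most that many vertices. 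Therefore some $\Gamma$-graph $\Delta$ occurs as $\Delta_r$ for infinitely many $r$. For this $\Delta$ we have $\langle K,\mu_\Delta\rangle_\alpha=\langle K,\mu\rangle_\alpha$ for all $K$ of radius $\le r$ for arbitrarily large $r$, hence for all $K\in\mathcal K_\Gamma$. Since a subset current is uniquely determined by its weights on all of $\mathcal K_\Gamma$, we conclude $\mu=\mu_\Delta$, which is exactly the assertion of the theorem (with the $H_i$ of Theorem~\ref{thm:C} being the subgroups represented by the connected components of $\Delta$, via Proposition-Definition~\ref{prop:wd}).

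I expect the main obstacle to be the bookkeeping in the stabilization argument: one must be careful that Theorem~\ref{thm:realization}, applied at level $r$, controls the weights of all trees of radius $\le r$ and not merely the round graphs of grade exactly $r$ --- this is where part (2) of Corollary~\ref{cor:b} is essential, since it expresses the weight of any radius-$\le r$ tree as a finite sum of weights of grade-$r$ round graphs, and the same linear identity holds for $\mu$, for $\mu_{\Delta_r}$, and for the hypothetical limit $\mu_\Delta$. A secondary point requiring care is the verification that $\vartheta_r\in\mathcal Q_{\Gamma, r}$: condition (1) is immediate from $F_N$-invariance of $\mu$, and condition (2) is precisely Proposition~\ref{prop:sum} specialized to semi-round graphs of grade $r$, so this is routine but should be spelled out. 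Finally, one should note that the integrality hypothesis is used in exactly one place --- to guarantee $\vartheta_r$ is $\mathbb Z_{\ge 0}$-valued so that Theorem~\ref{thm:realization} is applicable --- and that the positivity of the total mass $\mu(\mathfrak C_N)$ (a positive integer, since $\mu\ne 0$ and all weights are integers) supplies the needed $K_0$ with $\vartheta_r(K_0)>0$.
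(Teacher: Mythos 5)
Your proof is correct and follows essentially the same route as the paper's: realize the integer weight system at each grade $r$ via Theorem~\ref{thm:realization}, observe via Lemma~\ref{lem:v} that all the realizing graphs $\Delta_r$ have the same number of vertices, and extract a constant subsequence by pigeonhole. The only quibble is your identification of $\sum_{[K]\in \mathbf B_{\Gamma,r}}\langle K,\mu\rangle_\alpha$ with $\mu(\mathfrak C_N)$ (the cylinders over the translation classes do not partition $\mathfrak C_N$, and $\mu(\mathfrak C_N)$ need not even be finite a priori); what matters, and what you in fact correctly use, is that this sum is a fixed positive integer independent of $r$.
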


\begin{proof}
Put $M:=\sum_{[K]\in \mathbf B_{\Gamma,
  1}} \langle K, \mu\rangle_\alpha$. For every $r\ge 2$ define the function $\theta_r:  \mathcal B_{\Gamma,
  r}\to \mathbb R_{\ge 0}$ by $\theta_r(K):=\langle K,
\mu\rangle_\alpha$, where $K\in \mathcal B_{\Gamma,
  r}$. Since $\mu$ is a nonzero subset current, we have that $\theta_r\in \mathcal
Q_{\Gamma, r}$ for all $r\ge 2$. 

Hence, by Theorem~\ref{thm:realization}, for every $r\ge 1$ there
exists a finite cyclically reduced $\Gamma$-graph $\Delta_r$ such that
$\langle K, \Delta_r\rangle_\alpha=\langle K, \mu\rangle_\alpha$ for
every $K\in \mathcal B_{\Gamma, r}$. Corollary~\ref{cor:b} then
implies that for every $r\ge 2$ and every finite nondegenerate subtree
$K$ of $X$ of radius $\le r$ we have $\langle K,
\Delta_r\rangle_\alpha=\langle K, \mu\rangle_\alpha$. Hence, by
Lemma~\ref{lem:v}, each graph $\Gamma_r$ has exactly $M$ vertices.
There are only finitely many isomorphism types of finite cyclically
reduced $\Gamma$-graphs with $M$ vertices. Therefore there exists a
finite cyclically reduced $\Gamma$-graph $\Delta$ such that for some
sequence $r_n\to \infty$ as $n\to\infty$ the graph $\Delta_r$ is
isomorphic, as $\Gamma$-graph, to $\Delta$. For any finite  nondegenerate subtree
$K$ of $X$ there exists some $r_n$ such that $r_n$ is $\ge$ the radius
of $K$. Therefore, by construction, for every finite  nondegenerate subtree
$K$ of $X$ we have $\langle K,
\Delta\rangle_\alpha=\langle K, \mu\rangle_\alpha$.
This implies that $\mu_\Delta=\mu$, as required.

\end{proof}

\section{Rational subset currents are dense}

\begin{thm}\label{thm:dense}
Let $N\ge 2$. Then the set $\gcnr$ of all rational subset currents is dense in
$\gcn$.
\end{thm}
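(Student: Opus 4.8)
The plan is to deduce Theorem~\ref{thm:dense} from the Integral Weight Realization Theorem (Theorem~\ref{thm:realization}) by a rational-approximation-plus-clearing-denominators argument, analogous to the classical proof that $\mathcal R_A$ is dense in $\mathcal M_A$ once one has Proposition~\ref{prop:iw}, but now carried out in the finite-dimensional polyhedra $\mathcal Q_{\Gamma,r}$. First I would fix a marking $\alpha\colon F_N\to\pi_1(\Gamma)$ (say the $N$-rose, though any marking works), recall that convergence in $\gcn$ is tested on the weights $\langle K,\cdot\rangle_\alpha$ for $K\in\mathcal K_\Gamma$, and reduce the problem to approximating a given $\mu\in\gcn$ on a finite set of test trees. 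By part (2) of Corollary~\ref{cor:b}, knowing the weights of all round graphs of grade $r$ determines the weights of all subtrees of radius $\le r$; so it suffices to show that for every $r\ge 2$ and every $\epsilon>0$ there is a rational current $\nu$ (in fact a non-negative rational combination of counting currents) with $|\langle K,\nu\rangle_\alpha-\langle K,\mu\rangle_\alpha|<\epsilon$ for all round graphs $K$ of grade $r$.

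Next I would set up the polyhedron $\mathcal Q_{\Gamma,r}\subseteq\mathbb R_{\ge 0}^m$ (with $m=\#\mathbf B_{\Gamma,r}$), which is cut out by finitely many linear equations with \emph{integer} coefficients, namely condition~(2) of Definition~\ref{defn:Q}. The restriction $\vartheta_\mu\colon K\mapsto\langle K,\mu\rangle_\alpha$ of the weights of $\mu$ to $\mathcal B_{\Gamma,r}$ lies in $\mathcal Q_{\Gamma,r}$ by Proposition~\ref{prop:sum}. Since $\mathcal Q_{\Gamma,r}$ is a rational polyhedral cone (integer coefficients, and the only inequalities are $\vartheta(K)\ge 0$), the points of $\mathcal Q_{\Gamma,r}$ with all coordinates rational are dense in $\mathcal Q_{\Gamma,r}$: indeed $\mathcal Q_{\Gamma,r}\cap\mathbb Q^m$ is dense in $\mathcal Q_{\Gamma,r}$ because the cone is generated over $\mathbb R_{\ge 0}$ by finitely many rational (even integer) extreme rays. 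So I can choose a rational $\vartheta'\in\mathcal Q_{\Gamma,r}$ with $\|\vartheta'-\vartheta_\mu\|_\infty<\epsilon$, and if $\vartheta_\mu\ne 0$ I may also arrange $\vartheta'(K_0)>0$ for some $K_0$. Clearing denominators, there is an integer $q\ge 1$ with $q\vartheta'$ integer-valued; then $q\vartheta'\in\mathcal Q_{\Gamma,r}$ satisfies all the hypotheses of Theorem~\ref{thm:realization}, so there is a finite cyclically reduced (possibly disconnected) $\Gamma$-graph $\Delta$ with $\langle K,\Delta\rangle_\alpha=q\vartheta'(K)$ for all $K\in\mathcal B_{\Gamma,r}$.

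Writing $\Delta=\Delta_1\sqcup\dots\sqcup\Delta_s$ as its connected components and letting $H_j\le F_N$ be the finitely generated subgroup represented by $\Delta_j$, Proposition-Definition~\ref{prop:wd} gives $\mu_{\Delta_j}=\eta_{H_j}$, hence $\mu_\Delta=\sum_j\eta_{H_j}$, and therefore $\nu:=\tfrac1q\mu_\Delta=\tfrac1q\sum_j\eta_{H_j}$ is a non-negative rational combination of counting currents, in particular a finite sum of rational subset currents. By construction $\langle K,\nu\rangle_\alpha=\vartheta'(K)$ for all round graphs $K$ of grade $r$, so $|\langle K,\nu\rangle_\alpha-\langle K,\mu\rangle_\alpha|<\epsilon$ on all such $K$, and hence (by Corollary~\ref{cor:b}(2)) the weights of $\nu$ and $\mu$ are uniformly close on all subtrees of radius $\le r$. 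Letting $r\to\infty$ and $\epsilon\to 0$ exhibits $\mu$ as a limit of such $\nu$'s. The last point to address is that a finite sum $c_1\eta_{H_1}+\dots+c_k\eta_{H_k}$ with $c_i\ge 0$ rational is itself a limit of \emph{single} rational currents $c\eta_H$; this is the one place where a small extra argument is needed, and it is the analogue of the step in the classical proof deducing density of $\mathcal R_A$ from density of finite linear combinations of the $\mu_{w_i}$. The standard trick is: first reduce to a single positive rational coefficient (absorb denominators, and note $\eta_H+\eta_{H'}$ can be realized by a disconnected graph, hence is $\mu_\Delta$ for $\Delta$ disconnected); then observe that for a disconnected graph $\Delta$ one can, after passing to a suitable cover / taking many disjoint isomorphic copies and reconnecting them by a long thin "bridge", produce a \emph{connected} cyclically reduced graph whose occurrence counts of all bounded-radius trees differ from those of a scalar multiple of $\mu_\Delta$ by a negligible amount — giving a single counting current arbitrarily close projectively, and then scaling. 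I expect this "merge disjoint components into one connected graph without disturbing the bounded-radius statistics" step to be the main obstacle: it is exactly where the branching, non-linear geometry of $X$ must be handled carefully, and it must be done so that the graph stays folded and cyclically reduced while only finitely many vertices (those near the bridge) have their links changed, so that the per-vertex frequencies of each $[K]\in\mathbf B_{\Gamma,r}$ are perturbed by $O(1/M)$ as the number $M$ of copies grows.
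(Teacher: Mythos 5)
Your proposal is correct and follows essentially the same route as the paper: restrict $\mu$ to the rational polyhedron $\mathcal Q_{\Gamma,r}$, approximate by a rational point, clear denominators, apply Theorem~\ref{thm:realization}, and pass to the connected components of the resulting graph $\Delta$. The one step you flag as the remaining obstacle --- that a non-negative rational combination $\sum_j c_j\eta_{H_j}$ is a limit of single rational currents --- is exactly Proposition~5.2 of \cite{KN3}, which the paper simply invokes (its proof there is an explicit surgery using large finite covers, in the same spirit as the merging argument you sketch), so no new argument is needed at that point.
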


\begin{proof}
Let $\mu\in \gcn$ be a nonzero subset current. To show that $\mu$ can be approximated by rational subset currents it
suffices to show that for every integer $r\ge 1$ and any $\epsilon>0$
there exist $c\ge 0$ and a finite connected cyclically reduced
$\Gamma$-graph $\Delta$ such that for every nondegenerate subtree
$K\subseteq X$ of radius $\le r$ we have $|\langle K,
\mu\rangle_\alpha-\langle K, c\mu_{\Delta}\rangle|<\epsilon$.

Choose a large integer $r\ge 1$.
Define a function $\theta: \mathcal B_{\Gamma, r}\to \mathbb R_{\ge
  0}$ by putting $\theta(K)=\langle K, \mu\rangle_\alpha$. Then
$\theta\in \mathcal Q_{\Gamma, r}$. Since the polyhedron $\mathcal Q_{\Gamma, r}$ is
defined by a finite collection of linear equations and inequalities
with rational (actually, integer) coefficients, the points with
rational coordinates are dense in $\mathcal Q_{\Gamma, r}$. Thus we
can find a nonzero $\theta'\in \mathcal Q_{\Gamma, r}$ such that for
every $K\in \mathcal B_{\Gamma, r}$ $\theta'(K)\in
\mathbb Q$ and 
$|\theta'(K)-\theta(K)|$ is arbitrarily small. 

In view of Corollary~\ref{cor:b}, if $\mu'\in \gcn$ is such that
$\theta'(K)=\langle K, \mu'\rangle_\alpha$ for every $K\in  \mathcal
B_{\Gamma, r}$ then for every finite subtree $K\subseteq X$ of radius
$\le r$ the value $|\langle \mu, K\rangle_\alpha-\langle \mu',
K\rangle_\alpha|$ is also arbitrarily small.

Choose an integer $m\ge 1$ such that for every $K\in \mathcal
B_{\Gamma, r}$ we have $m\theta'(K)\in \mathbb Z$ and put
$\theta'':=m\theta'$. By Theorem~\ref{thm:realization}, there exists a
finite cyclically reduced $\Gamma$-graph $\Delta$ such that $\langle
K, \mu_\Delta\rangle_\alpha=\theta''(K)=m\theta'(K)$ for every $K\in \mathcal
B_{\Gamma, r}$.

Let $\Delta_1,\dots, \Delta_s$ be the connected components of
$\Delta$. Put $\mu':=\frac{1}{m}\mu_\Delta=\sum_{i=1}^s
\frac{1}{m}\mu_{\Delta_i}$. Thus each $\frac{1}{m}\mu_{\Delta_i}$ is
rational and hence $\mu'$ belongs to the linear span of the set of all
rational subset currents in $\gcn$. By Proposition~5.2 of \cite{KN3},
the set $\gcnr$ of all rational currents is dense in its linear span in
$\gcn$. (Note that the proof of Proposition~5.2 in \cite{KN3} was
based on an explicit combinatorial surgery argument using large finite covers
and did not rely on the results of Bowen and Elek about unimodular graph
measures). 
Therefore there exists $\mu''\in\gcnr$ such
that $|\langle K,
\mu'\rangle_\alpha-\langle K, \mu''\rangle|$ is arbitrary small for
all finite subtrees $K\subseteq X$ of radius $\le r$. It follows that for every  finite subtree $K\subseteq X$ of radius
$\le r$ the value $|\langle \mu, K\rangle_\alpha-\langle \mu'',
K\rangle_\alpha|$ is also arbitrarily small, as required.

\end{proof}

\end{document}